\documentclass[12pt]{amsart}

\numberwithin{equation}{section}

\usepackage{amsmath,amsthm,amsfonts,eucal,}
\usepackage{graphicx}
\usepackage{xypic}
\usepackage{amssymb}

\hfuzz12pt \vfuzz12pt

\def\cb{{\mathcal B}}

\def\cd{{\mathcal D}}

\def\cf{{\mathcal F}}
\def\cg{{\mathcal G}}
\def\ch{{\mathcal H}}

\def\cam{{\mathcal M}}

\def\cu{{\mathcal U}}

\def\ga{{\mathfrak A}}

\def\gf{{\mathfrak F}}
\def\gpg{{\mathfrak g}}

\def\bc{{\mathbb C}}

\def\bn{{\mathbb N}}

\def\br{{\mathbb R}}

\def\bz{{\mathbb Z}}

\def\a{\alpha}
\def\b{\beta}
\def\g{\gamma}  
\def\d{\delta}  

\def\eps{\varepsilon}

\def\m{\mu}
\def\p{\pi}

\def\r{\rho}
\def\s{\sigma} 

\def\f{\varphi}  
  
\def\om{\omega} \def\Om{\Omega}

\newtheorem{thm}{Theorem}[section]
\newtheorem{lem}[thm]{Lemma}
\newtheorem{cor}[thm]{Corollary}
\newtheorem{prop}[thm]{Proposition}

\theoremstyle{definition}

\newtheorem{defin}[thm]{Definition}

\def\dim{\mathop{\rm dim}}
\def\min{\mathop{\rm min}}
\def\supp{\mathop{\rm supp}}
\def\Im{\mathop{\rm Im}}

\def\sgn{\mathop{\rm sgn}}

\def\sp{\mathop{\rm span}}

\begin{document}


\title[Weakly Monotone Fock Space]
{Weakly Monotone Fock Space and monotone convolution of the Wigner law}
\author{Vitonofrio Crismale}
\address{Vitonofrio Crismale\\
Dipartimento di Matematica\\
Universit\`{a} degli studi di Bari\\
Via E. Orabona, 4, 70125 Bari, Italy}
\email{\texttt{vitonofrio.crismale@uniba.it}}
\author{Maria Elena Griseta}
\address{Maria Elena Griseta\\
Dipartimento di Matematica\\
Universit\`{a} degli studi di Bari\\
Via E. Orabona, 4, 70125 Bari, Italy}
\email{\texttt{maria.griseta@uniba.it}}
\author{Janusz Wysocza\'nski}
\address{Janusz Wysocza\'nski\\
Institute of Mathematics\\
Wroclaw University\\
pl. Grunwaldzki 2/4, 50-384 Wroclaw, Poland}
\email{{\texttt{jwys@math.uni.wroc.pl}}}

\date{\today}

\begin{abstract}
We study the distribution (w.r.t. the vacuum state) of family of partial sums $S_m$ of position operators on weakly monotone Fock space. We show that any single operator has the Wigner law, and an arbitrary family of them (with the index set linearly ordered) is a collection of monotone independent random variables. It turns out that our problem equivalently consists  in finding the $m$-fold monotone convolution of the semicircle law. For $m=2$ we compute the explicit distribution. For any $m>2$ we give the moments of the measure, and show it is absolutely continuous and compactly supported on a symmetric interval whose endpoints can be found by a recurrence relation.
\vskip0.1cm\noindent \\
{\bf Mathematics Subject Classification}: 46L53, 46L54, 60B99, 05A18 \\
{\bf Key words}: non commutative probability, monotone independence and convolution, semicircle law, generalized Catalan recurrences.\\
\end{abstract}

\maketitle

\section{introduction}
\label{sec1}

Weakly monotone Fock space was first introduced in \cite{Wys2005}, mainly in order to exhibit the first construction of monotone independent non-commutative random variables with the \emph{arcsine} law, and different from the gaussian operators in monotone Fock spaces \cite{[Lu95a],Mur}. It belongs to the collection of Fock spaces obtained by the so-called Yang-Baxter-Hecke quantization \cite{Boz}, which also contains Bose, Fermi, boolean and monotone Fock spaces, as well as the so-called $Q$-deformed Fock spaces, investigated in \cite{BLW} for their natural applications to the study of L\'{e}vy processes for anyon statistics. The strong ergodic properties for $C^*$-dynamical systems arising from Yang-Baxter-Hecke deformation of usual Fock spaces were studied in \cite{erg}, whereas in \cite{CFG} the states in monotone $*$-algebra invariant under some distributional symmetries were classified.

In this paper we study the distributions (with respect to the vacuum state) of family of partial sums $\displaystyle S_m:=\sum_{i=1}^{m} G_i$ of position operators $G_i:=A_i+A_i^{\dag}$ on the weakly monotone Fock space $\gf_{WM}(\ch)$, $\ch$ being a separable Hilbert space. Here, $A_i$ and $A^\dag_i$ are the annihilation and creation operators with the test function given by any arbitrary element of the canonical basis of $\ch$, respectively. We establish the $G_i$ are monotone independent, and moreover any of them has the distribution given by the (absolutely continuous) Wigner semicircle law with density $\nu(dx):=\displaystyle \frac{1}{2\pi}\sqrt{4-x^2}$ on $[-2,2]$. Up to our knowledge, this is the first example of a family of monotonically independent non commutative random variables with the Wigner law. One further notices a deep difference with respect to the case of monotone Fock space, where the analogous operators are Bernoulli distributed onto the two points $-1$ and $1$. The previous mentioned results here obtained also suggest that the distribution of $S_m$ is given by the $m-$fold monotone convolution \cite{Mu} of the Wigner measure with itself $\nu^{\rhd m}:=\nu\rhd \nu\rhd \cdots \rhd \nu$. As a consequence, our investigation here can be also viewed as a study of the monotone convolution of the semicircle law.

More in detail, in Section \ref{sec2} we first recall the definitions of weakly monotone Fock space and the basic operators on it. Then, in Theorem \ref{monotone independence thm}, we show that the $*$-algebras generated by any single $A_i$ are monotonically independent among the bounded operators on the weakly monotone Fock space. This result entails automatically the monotone independence of position operators. The section ends by a quick review of the Cauchy transform of a measure and monotone convolution.

Section \ref{sec3}, organized into two subsections, opens with the proof that any $G_i$ has the semicircle law as its vacuum distribution. In the first subsection we deal with the problem of sum of two position operators. The law, whose moments are computed in Proposition \ref{momcase2}, is further explicitly found in Theorem \ref{dens} as an absolutely continuous measure supported in $[-\frac{5}{2},\frac{5}{2}]$. Here we stress such results are obtained without using monotone independence. On the contrary, an explicit computation of the distribution for a sum of at least three operators appears quite complicated, even exploiting the monotone convolution. Nevertheless this feature, as well as the properties of Cauchy transform and its reciprocal map, allows us to state that such laws, corresponding to $\nu^{\rhd m}$, $m$ being the number of operators in the sum, are absolutely continuous, symmetric and compactly supported on intervals of the form $[-a_m, a_m]$. This is the main result of the second subsection. Furthermore, a nice recurrence relation for the right endpoints of the above intervals is achieved, namely $a_1=2$ and  $\displaystyle a_{m+1}=a_m+\frac{1}{a_m}$ for any $m$. This entails that the sequence $\big(\frac{a_m}{\sqrt{m}}\big)_m$ is decreasing and converges to $\sqrt{2}$. Of course this should be expected since, by the monotone CLT, $\displaystyle \frac{1}{\sqrt{m}}S_m$ weakly converges to the arcsine law with density $\displaystyle \frac{1}{\pi\sqrt{2-x^2}}dx$ on $(-\sqrt{2}, \sqrt{2})$. We point out that the above presented results apply to any sum $\displaystyle\sum_{k\in I}G_k$, where $I$ is a finite set with $m$ elements. Equivalent study of moments defines a family of positive definite sequences $\displaystyle \{\left(d_n^{(m)} \right)_{n\geq 0}\mid m=1, 2, \ldots \}$, where $\displaystyle d_n^{(m)} := \om_{\Om}\left(\left(S_m\right)^{2n}\right)$. Here we consider only the even moments, since the odd ones vanish for each $m$. We show that this family satisfies the recurrence
\begin{equation*}
d_n^{(0)}\equiv 1, \ d_n^{(1)}=C_n, \quad \text{and} \quad d_n^{(m)}=\sum_{k=1}^{n}d_{n-k}^{(m)}\sum_{j=1}^{m}d_{k-1}^{(j)},
\end{equation*}
where $C_n$ are the Catalan numbers. As a consequence each sequence consists of positive integers, which can be regarded as the $m-$fold monotone convolutions of the sequence of Catalan numbers, namely $\displaystyle \left(d_n^{(m)}\right)_{n\geq 0}:=\left(\left(C_n \right)_{n\geq 0}\right)^{\rhd m}$. To get a flavour, here we list few examples of them, the reader being addressed to the Appendix for more information.
\begin{eqnarray*}
d_n^{(2)}&=&1, 2, 7, 29, 131, 625, 3099, 15818, 82595, 439259, \ldots \\
d_n^{(3)}&=&1, 3, 15, 87, 544, 3566, 24165, 167904,  8568923, \ldots \\
d_n^{(4)}&=&1, 4, 26, 194, 1551, 12944, 111313, 979009, 8764089,  \ldots
\end{eqnarray*}
In the appendix one also finds the computation of some terms of the sequence of monotone cumulants \cite{HasSa} of the Wigner law, as well some of the orthogonal polynomials for $m=2$.

Finally, we show that $\displaystyle \left(d_n^{(m)}\right)_{m\geq 0}$ with fixed $n$, are indeed sequences of polynomials in $m$ of degree $n$. Examples of the first of these polynomials are
\begin{equation*}
d_0^{(m)}\equiv 1, \quad d_1^{(m)} = m, \quad d_2^{(m)} = \frac{3m^2+m}{2}, \quad d_3^{(m)} = \frac{m(5m^2+4m+1)}{2}\,.
\end{equation*}

\section{preliminaries}
\label{sec2}

This section is mainly devoted to recall some notions and features and show some new results useful throughout the paper. We start with the so-called weakly monotone Fock space and the properties of creation and annihilation operators defined on it, the interested reader being addressed to \cite{Wys2005} for more details.

\subsection{Weakly monotone Fock space}
\label{sec2a}
Let $\ch$ be a separable Hilbert space with a fixed orthonormal basis $\{e_i:i\geq1\}$. By $\gf(\ch)$ we denote the full Fock space on $\ch$, the vacuum vector is $\Om=1\oplus0\oplus\ldots$, and $a_i:=a(e_i)$ and $a_i^{\dag}:=a(e_i)^{*}$ are the standard annihilation and creation operators by the
vector $e_i$, respectively. The \emph{weakly monotone Fock space}, in the sequel denoted by $\gf_{WM}(\ch)$, is the closed subspace of $\gf(\ch)$ spanned by $\Om$, $\ch$ and all the simple tensors of the form $e_{i_k}\otimes e_{i_{k-1}}\otimes \cdots\otimes e_{i_1}$, where $i_k\geq i_{k-1}\geq\ldots\geq i_1$
and $k\geq2$.

If the Hilbert space $\ch$ is finite dimensional with $n =\dim(\ch)\geq 2$, then the basis for $\gf_{WM}(\ch)$ consists of the vacuum and all the simple tensors
\begin{equation}
\label{basis}
e_n^{k_n}\otimes e_{n-1}^{k_{n-1}}\otimes\cdots\otimes e_1^{k_1}
\end{equation}
where $k_n, k_{n-1},\ldots,k_1\geq 0$, $e_m^{k}:=\underbrace{e_m\otimes\cdots\otimes e_m}_k$ if $k\geq1$, and the convention that $e^{k_i}_{i}$ does not appear in \eqref{basis} if $k_i=0$.

The weakly monotone creation and annihilation operators with "test function" $e_i$, denoted by $A^\dag_i$ and $A_i$ respectively, are defined as follows
$$
A_i(e_j)=\delta_{ij}\Om, \qquad A_i(e_{i_k}\otimes e_{i_{k-1}}\otimes \cdots\otimes e_{i_1})=\delta_{ii_k}e_{i_{k-1}}\otimes \cdots\otimes e_{i_1}\,,
$$
where $\delta_{ij}$ is the Kronecker symbol and
\begin{equation*}
\begin{split}
A^{\dag}_i(\Om)&=e_i \\
A^{\dag}_i(e_{i_k}\otimes e_{i_{k-1}}\otimes \cdots\otimes e_{i_1})&=\begin{cases}
e_i\otimes e_{i_k}\otimes e_{i_{k-1}}\otimes \cdots\otimes e_{i_1}&\text{if $i\geq i_k$,}\\
0&\text{if $i<i_k$.}\\
\end{cases}
\end{split}
\end{equation*}
One notices that, after denoting by $P_M$ the orthogonal projection from the full Fock space to the weakly monotone one, then $A_i=P_Ma_iP_M$ and $A^\dag_i=P_Ma^{\dag}_iP_M$. These operators are moreover adjoint to each other, and bounded with norm one. Further they satisfy the following identities
\begin{equation}
\label{cr}
\begin{split}
A^{\dag}_iA^{\dag}_j&=A_jA_i =0 \quad\text{if $i<j$,}\\
A_iA^{\dag}_j& =0 \,\,\,\,\,\,\,\,\,\,\,\,\,\,\,\,\,\,\,\,\,\,\,\,\,\,\, \text{if $i\neq j$.} \\
\end{split}
\end{equation}
For each $i\in\mathbb{N}$ we define the self-adjoint field position operators $G_i:=A_i+A_i^{\dag}$.\\
The following technical result, which can be given as the weakly monotone version of Lemma 5.4 in \cite{erg}, will be very useful through the paper.
\begin{lem}
\label{3a}
For any $k,j\geq1$, one has
\begin{align}
\label{alpha}
A_kA_jA_j^{\dag}&=\a_{j,k}A_k  & A_jA_j^{\dag}A_k^{\dag}=\a_{j,k}A_k^{\dag}\,,
\end{align}
where
\begin{equation*}
\a_{j,k}=\begin{cases}
1 & \text{if $j\geq k$,}\\
0 & \text{otherwise.}
\end{cases}
\end{equation*}
Moreover, for $j\geq k$
\begin{align}
\label{jgeqk}
A_jA^{\dag}_jA_k&=A_k  & A_k^{\dag}A_jA_j^{\dag}=A_k^{\dag}\,.
\end{align}
\end{lem}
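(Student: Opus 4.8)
The plan is to verify all four operator identities by evaluating both sides on the orthonormal basis of $\gf_{WM}(\ch)$ described in \eqref{basis}, that is, on the vacuum $\Om$ and on the ordered simple tensors $\xi = e_{i_p}\otimes\cdots\otimes e_{i_1}$ with $i_p\geq\cdots\geq i_1$. Since every operator involved is bounded and these vectors span $\gf_{WM}(\ch)$, checking equality on them is enough. The whole computation hinges on understanding the single composite $A_jA_j^{\dag}$, so I would isolate that first.

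First I would compute $A_jA_j^{\dag}$ on a basis vector. Applying $A_j^{\dag}$ to $\xi = e_{i_p}\otimes\cdots\otimes e_{i_1}$ prepends $e_j$ precisely when $j\geq i_p$ and gives $0$ otherwise; applying $A_j$ then strips the leading $e_j$ back off. Hence $A_jA_j^{\dag}\xi=\xi$ when $j\geq i_p$ and $A_jA_j^{\dag}\xi=0$ when $j<i_p$, while $A_jA_j^{\dag}\Om=\Om$. In other words $A_jA_j^{\dag}$ is the orthogonal projection onto the closed span of $\Om$ together with those basis tensors whose leftmost index does not exceed $j$. This single observation drives everything.

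For the first identity $A_kA_jA_j^{\dag}=\a_{j,k}A_k$ I would argue by cases on $\xi$. If $A_k\xi\neq0$ then necessarily $k=i_p$, the leading index of $\xi$; in that situation $j\geq k$ is equivalent to $j\geq i_p$, which is exactly the condition under which $A_jA_j^{\dag}$ fixes $\xi$. Thus when $j\geq k$ one gets $A_kA_jA_j^{\dag}\xi = A_k\xi = \a_{j,k}A_k\xi$, and when $j<k$ the projection annihilates $\xi$ while $\a_{j,k}=0$, so both sides vanish; the case $A_k\xi=0$ is trivial and $\Om$ is immediate. The companion identity $A_jA_j^{\dag}A_k^{\dag}=\a_{j,k}A_k^{\dag}$ then follows for free by taking adjoints, using that $A_jA_j^{\dag}$ is self-adjoint, that $(A_k)^{*}=A_k^{\dag}$, and that $\a_{j,k}$ is real.

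Finally, for the relations \eqref{jgeqk} valid when $j\geq k$, I would again apply $A_k$ first: if $A_k\xi\neq0$ then $k=i_p$ and $A_k\xi$ has new leading index $i_{p-1}$, which by weak monotonicity satisfies $i_{p-1}\leq i_p=k\leq j$ (with the understanding that $A_k\xi=\Om$ is fixed by $A_jA_j^{\dag}$ as well). Hence the projection $A_jA_j^{\dag}$ leaves $A_k\xi$ unchanged, giving $A_jA_j^{\dag}A_k=A_k$, and the adjoint yields $A_k^{\dag}A_jA_j^{\dag}=A_k^{\dag}$. The only real subtlety --- and the place where the weakly monotone ordering is indispensable --- is precisely this step: it is the inequality $i_{p-1}\leq i_p$ among the successive indices of a basis tensor that forces the intermediate vector to stay inside the range of the projection $A_jA_j^{\dag}$. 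Everything else is bookkeeping.
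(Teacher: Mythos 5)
Your proof is correct and takes essentially the same route as the paper: both arguments verify the identities by direct evaluation on the canonical basis of $\gf_{WM}(\ch)$ (with the case split on the leading index, resp.\ on whether its multiplicity is $1$) and obtain the companion identities by taking adjoints. Your packaging of $A_jA_j^{\dag}$ as the orthogonal projection fixing $\Om$ and the ordered tensors with leading index at most $j$ is just a tidy reformulation of the computation the paper carries out blockwise.
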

\begin{proof}
\eqref{alpha} easily follows from the definition of creation and annihilation operators, the first and the second equality in \eqref{cr}.

For the first equality in \eqref{jgeqk}, one notices
\begin{equation*}
A_jA_j^{\dag}A_k\Om=0=A_k\Om
\end{equation*}
and further, for $k_1,\ldots, k_n>0$, $i_1<i_2<\cdots < i_n$,
\begin{equation*}
A_jA_j^{\dag}A_ke_{i_n}^{k_n}\otimes e_{i_{n-1}}^{k_{n-1}}\otimes\cdots\otimes e_{i_1}^{k_1}=\left\{
\begin{array}{ll}
 \d_{k,n}e_{i_{n-1}}^{k_{n-1}}\otimes\cdots\otimes e_{i_1}^{k_1}  & \text{if $k_n=1$,} \\
 \d_{k,n}e_{i_n}^{k_n-1}\otimes e_{i_{n-1}}^{k_{n-1}}\otimes\cdots\otimes e_{i_1}^{k_1}  & \text{if $k_n>1$.}
\end{array}
\right.
\end{equation*}
In both the cases the quantity on the r.h.s. above is nothing else than
$$
A_ke_{i_n}^{k_n}\otimes e_{i_{n-1}}^{k_{n-1}}\otimes\cdots\otimes e_{i_1}^{k_1}\,.
$$
The second identity in \eqref{jgeqk} is achieved by taking the adjoint.
\end{proof}
Using the previous Lemma is easy to show that for each $i\geq1$, $A_iA_i^{\dag}$ and $A_i^{\dag}A_i$ are self-adjoint projections, and
$$
A_iA_i^{\dag}=A_i^{m}(A_i^{\dag})^m,\,\,\,\,\,\,\,\,\,\,\,\, m\in\mathbb{N}\,.
$$

\subsection{Monotone independence of position operators}
For $i\geq 1$ let $\cb_i$ be the *-algebra generated by $A_i, A_i^{\dag}$ and let $\cb$ be the unital $*-$algebra of all bounded operators on $\gf_{WM}(\ch)$. We shall show that the algebras $\{\cb_i:\ i\geq 1\}$ are monotone independent in the non-commutative probability space $(\cb, \om_{\Om})$, where we denote $\om_{\Om}(\cdot):=\langle\Om,\cdot\Om\rangle$ the vacuum state. This implies the position operators $(G_i)_{i\geq 1}$ are monotone independent.

Observe first that the closed subspace
$$
\gf_{WM}^{\leq i}(\ch):=\overline{\sp\{e_{l_m}\otimes \cdots \otimes e_{l_1}:\ l_1\leq \ldots \leq l_m \leq i, \ m\geq 1\}}
$$
is left invariant by both $A_i, A_i^{\dag}$, and so it is the space
$$
\gf_{WM}^{> i}(\ch):=\overline{\sp\{e_{l_m}\otimes \cdots \otimes e_{l_1}:\ l_1\leq \ldots \leq l_m >i, \ m\geq 1 \}},
$$
which is in the kernel of both operators. It follows in particular that $I\notin \cb_i$.

Moreover, $A_i^{\dag}A_i$ is the orthogonal projection onto
$$
\gf_{WM}^{= i}(\ch):=\overline{\sp\{e_{l_m}\otimes \cdots \otimes e_{l_1}:\ l_1\leq \ldots \leq l_m = i, \ m\geq 1\}}.
$$
To describe elements of $\cb_i$ we shall use some additional notations. $\cb_i$ is given by non-commutative polynomials in $A_i, A_i^{\dag}$,  so that $p\in \cb_i$ can be written as a finite sum $\displaystyle p=\sum_{|\a|\geq 1}c_{\a}A_i^{\a}$, where for each multi-index $\a:=(k_1, \ldots , k_n)\in \bz^n$, and $n\in \mathbb{N}$, one has $A_i^{\a}:=A_i^{k_1}\cdots A_i^{k_n}$, with $k_jk_{j+1}<0$ (alternating signs) for $1\leq j \leq n-1$.
\begin{thm}
\label{monotone independence thm}
The algebras $\{\cb_i:\ i\geq 1\}$ are monotone independent in $(\cb, \om_{\Om})$ so that they satisfy the following two conditions:

\noindent (M1) if $i<j>k$ then
\begin{equation}
\label{M1}
p_ip_jp_k=\om_{\Om}(p_j)p_ip_k,
\end{equation}
whenever $p_i\in \cb_i$, $p_j\in \cb_j$, $p_k\in \cb_k$.

\noindent (M2) if $j_1>\cdots >j_k<\cdots < j_n$ then
\begin{equation}
\label{M2}
\om_{\Om}(p_{j_1}\cdots p_{j_k} \cdots p_{j_n})= \prod_{m=1}^{n}\om_{\Om}(p_{j_m}),
\end{equation}
whenever $p_{j_m}\in \cb_{j_m}$ for $m=1, \ldots , n$.
\end{thm}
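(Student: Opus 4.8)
The plan is to reduce everything to a single structural identity describing how an operator $p_j\in\cb_j$ acts on vectors all of whose indices are strictly smaller than $j$. Before that I would record the geometric facts already visible from the definitions: each basis tensor has a well-defined leading (largest) index, giving an orthogonal splitting of $\gf_{WM}(\ch)$ into $\gf_{WM}^{\le i}(\ch)$ and the subspace of tensors with leading index $>i$; the algebra $\cb_i$ maps the whole space into $\gf_{WM}^{\le i}(\ch)$ and annihilates every tensor of leading index $>i$; one has $A_iA_i^{\dag}=P_{\le i}$ and $A_i^{\dag}A_i=P_{=i}$ on $\gf_{WM}(\ch)$; and crucially $I\notin\cb_i$, so $\cb_i$ genuinely kills those tensors rather than merely fixing them.

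The heart of the argument is the following claim $(\star)$: if $k<j$ and $v\in\gf_{WM}^{\le k}(\ch)$, then $p_jv=\om_\Om(p_j)\,v+w$ with $w\in\gf_{WM}^{=j}(\ch)$. I would prove it by linearity on the alternating monomials $A_j^{\a}$ that span $\cb_j$, computing $A_j^{\a}v$ directly on basis tensors of leading index $<j$ and using \eqref{cr} and Lemma \ref{3a}. Since $v$ carries no leading $e_j$, such a monomial can only build a stack $e_j^{p}$ on top of $v$ and partially dismantle it; tracking the action from the rightmost to the leftmost block shows that $A_j^{\a}v$ is either $0$, or $v$, or $e_j^{p}\otimes v$ with $p\ge1$, so the coefficient of $v$ is a scalar independent of $v$ and the remainder lies in $\gf_{WM}^{=j}(\ch)$. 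Evaluating at $v=\Om$ identifies that scalar as $\langle\Om,A_j^{\a}\Om\rangle$, whence the coefficient of $v$ in $p_jv$ equals $\om_\Om(p_j)$. This monomial bookkeeping is the main obstacle: it is where the weakly monotone relations are genuinely used, and it must be carried out carefully enough to cover all block lengths and both signs of the total exponent.

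Granting $(\star)$, condition (M1) is immediate, and it suffices to check it on vectors. For arbitrary $x\in\gf_{WM}(\ch)$ set $u:=p_kx$; since $k<j$ one has $u\in\gf_{WM}^{\le k}(\ch)$, so $(\star)$ gives $p_ju=\om_\Om(p_j)u+w$ with $w\in\gf_{WM}^{=j}(\ch)$. As $j>i$, the remainder $w$ has leading index $>i$ and is therefore annihilated by $\cb_i$; hence $p_ip_jp_kx=p_ip_ju=\om_\Om(p_j)p_iu=\om_\Om(p_j)p_ip_kx$, which is \eqref{M1}.

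Finally I would obtain (M2) by induction on $n$, peeling off the factor sitting at whichever endpoint carries the largest index, since in a valley $j_1>\cdots>j_k<\cdots<j_n$ the maximum is always $j_1$ or $j_n$. Suppose $j_n\ge j_1$; then the right arm is nontrivial and $j_{n-1}<j_n$, and applying $(\star)$ to $p_{j_n}\Om$ yields $p_{j_n}\Om=\om_\Om(p_{j_n})\Om+w_n$ with $w_n\in\gf_{WM}^{=j_n}(\ch)$ of leading index $j_n>j_{n-1}$. Since $\cb_{j_{n-1}}$ annihilates every tensor of leading index $>j_{n-1}$, the term $p_{j_1}\cdots p_{j_{n-1}}w_n$ vanishes, leaving $\om_\Om(p_{j_1}\cdots p_{j_n})=\om_\Om(p_{j_n})\,\om_\Om(p_{j_1}\cdots p_{j_{n-1}})$; the remaining product runs over a valley of length $n-1$, so the inductive hypothesis closes the case. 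The symmetric situation $j_1>j_n$ (in particular a purely decreasing chain) is reduced to the one just treated by passing to adjoints, using that each $\cb_j$ is a $*$-algebra and that $\om_\Om$ is a state.
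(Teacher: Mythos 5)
Your proposal is correct and takes essentially the same route as the paper: your claim $(\star)$ is exactly the identity $p_jv=\om_{\Om}(p_j)v+h_j\otimes v$ (together with $p_ih_j=0$ for $i<j$) on which the paper's proof of (M1) rests, and your treatment of (M2) --- killing the remainder term at the vacuum along the increasing arm and passing to adjoints for the decreasing arm --- is the paper's argument recast as an induction on $n$. The only cosmetic differences are that you collapse the paper's three-case analysis on the leading index of $v$ into the single observation that $p_kx$ lies in $\gf_{WM}^{\leq k}(\ch)\oplus\bc\Om$ (note that the vacuum component must be included, so $(\star)$ should be stated for $v\in\gf_{WM}^{\leq k}(\ch)\oplus\bc\Om$, as your own use of it at $v=\Om$ already presupposes), a mild streamlining rather than a different method.
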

\begin{proof}
To prove (M1) we first consider an arbitrary simple tensor $v:=e_{l_m}\otimes \cdots \otimes e_{l_1}$ with $l_1\leq \ldots \leq l_m$, $m\geq 1$
and show that for $i<j>k$,
\begin{equation}
\label{M1v}
p_ip_jp_kv=\om_{\Om}(p_j)p_ip_kv.
\end{equation}
For this purpose we consider three cases.

\bigskip

\noindent (1) $k<l_m$. Here, both sides of \eqref{M1v} are zero.

\bigskip

\noindent (2) $k>l_m$. Take an arbitrary $s\geq 1$ and use the notation $p_k \Om=\om_{\Om}(p_s)\Om+h_s$, where $h_s\in \sp\{e_s^{\otimes m} \mid m\geq 1\}$.
Since $p_ih_j=0$, one has
\begin{eqnarray*}
p_ip_jp_k v &= & p_ip_j[\om_{\Om}(p_k)v+h_k\otimes v] \\
&=& p_i\om_{\Om}(p_k)[\om_{\Om}(p_j)v+h_j\otimes v]+p_i[\om_{\Om}(p_j)h_k\otimes v+ h_j\otimes h_k\otimes v] \\
&=& \om_{\Om}(p_j)p_i[\om_{\Om}(p_k)v+h_k\otimes v] \\
&=& \om_{\Om}(p_j)p_ip_k v.
\end{eqnarray*}

\bigskip

\noindent (3) $k=l_m$. Here, if $p_k v$ is not null, since $p_k v=u+c\Om$ for $u\in\gf_{WM}^{\leq k}(\ch)$ and $c\in\mathbb{C}$, $p_i h_j=0$ gives
\begin{equation*}
p_ip_jp_k v = p_i[\om_{\Om}(p_j)(u+c\Om)+h_j\otimes (u+c\Om)] = \om_{\Om}(p_j)p_ip_k v.
\end{equation*}
Hence in this case \eqref{M1v} holds.\\
Finally, when $v=\Om$,
\begin{eqnarray*}
p_ip_jp_k \Om &= & p_ip_j[\om_{\Om}(p_k)\Om+h_k] \\
&=& p_i\om_{\Om}(p_k)[\om_{\Om}(p_j)\Om+h_j]+p_i[\om_{\Om}(p_j)h_k+ h_j\otimes h_k] \\
&=& \om_{\Om}(p_j) [ \om_{\Om}(p_k) (\om_{\Om}(p_i)\Om+h_i) + p_ih_k] \\
&=& \om_{\Om}(p_j)[\om_{\Om}(p_k)p_i\Om+p_ih_k] \\
&=& \om_{\Om}(p_j)p_ip_k\Om,
\end{eqnarray*}
since again $p_ih_j=0$. \eqref{M1} then follows as the collection of simple tensors used above $v$ is linearly dense in $\gf_{WM}(\ch)$.

To prove (M2), we first notice
\begin{equation}
\label{pkpr}
p_kp_r\Om=\om_{\Om}(p_r)p_k\Om,\,\,\,\,\,\,\,\, \text{if $k<r$.}
\end{equation}
Then, for $j_1>\cdots >j_k<\cdots < j_n$, from \eqref{pkpr} we get
\begin{equation*}
\begin{split}
\om_{\Om}(p_{j_1}\cdots p_{j_k} \cdots p_{j_n}) &= \langle\Om, p_{j_1}\cdots p_{j_k} \cdots p_{j_n}\Om\rangle\\
&=\om_{\Om}(p_{j_{k+1}})\cdots \om_{\Om}(p_{j_{n}})\om_{\Om}(p_{j_1}\cdots p_{j_k}) \\
&= \om_{\Om}(p_{j_{k+1}})\cdots \om_{\Om}(p_{j_{n}})\overline{\om_{\Om}(p^{*}_{j_k}\cdots p^{*}_{j_1})} \\
&= \om_{\Om}(p_{j_{k+1}})\cdots \om_{\Om}(p_{j_{n}})\overline{\om_{\Om}(p^{*}_{j_1})}\cdots \overline{\om_{\Om}(p^{*}_{j_k})}\\
&= \om_{\Om}(p_{j_{k+1}})\cdots \om_{\Om}(p_{j_n})\om_{\Om}(p_{j_1})\cdots \om_{\Om}(p_{j_k}),\\
\end{split}
\end{equation*}
which gives \eqref{M2}.
\end{proof}
Since for the position operators we have $G_i\in \cb_i$, we achieve the following crucial information.
\begin{cor}
\label{monotone independence cor}
The position operators $(G_i)_{i\geq 1}$ are monotone independent in $(\cb, \om_{\Om})$.
\end{cor}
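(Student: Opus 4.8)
The plan is to obtain this as an immediate consequence of Theorem \ref{monotone independence thm}. First I would recall that, by definition, the family $(G_i)_{i\geq 1}$ is monotone independent in $(\cb, \om_{\Om})$ precisely when the $*$-algebras $\cc_i$ generated by each $G_i$ form a monotone independent family, i.e.\ satisfy conditions (M1) and (M2). Since $G_i=A_i+A_i^{\dag}$ and $\cb_i$ is by construction the $*$-algebra generated by $A_i$ and $A_i^{\dag}$, we have $G_i\in\cb_i$, and hence $\cc_i\subseteq\cb_i$ for every $i\geq 1$.

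The key point I would then stress is that monotone independence is hereditary with respect to passing to $*$-subalgebras: the relations (M1) and (M2) in Theorem \ref{monotone independence thm} are universally quantified over arbitrary elements of the algebras $\cb_i$, so they continue to hold when each factor is taken from the smaller algebra $\cc_i\subseteq\cb_i$. Concretely, for $i<j>k$ and any $q_i\in\cc_i$, $q_j\in\cc_j$, $q_k\in\cc_k$, relation \eqref{M1} applies verbatim because $q_i\in\cb_i$, $q_j\in\cb_j$, $q_k\in\cb_k$; and likewise the factorization \eqref{M2} holds for any choice of $q_{j_m}\in\cc_{j_m}\subseteq\cb_{j_m}$ along an alternating chain $j_1>\cdots>j_k<\cdots<j_n$.

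Since both defining conditions of monotone independence are thus inherited by the subfamily $\{\cc_i:\ i\geq 1\}$, the algebras generated by the position operators are monotone independent, which is exactly the assertion that $(G_i)_{i\geq 1}$ are monotone independent random variables in $(\cb, \om_{\Om})$. I do not anticipate any genuine obstacle here, as the entire analytic content has already been absorbed into Theorem \ref{monotone independence thm}; the only point requiring a moment of care is the bookkeeping of units. Indeed, since $I\notin\cb_i$, one works throughout with the non-unital $*$-algebras generated by the $A_i$, and correspondingly with the non-unital $*$-algebras $\cc_i$ generated by the $G_i$, so that the inclusion $\cc_i\subseteq\cb_i$ and the applicability of (M1)--(M2) to elements of $\cc_i$ remain unambiguous.
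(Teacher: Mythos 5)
Your proposal is correct and follows exactly the paper's route: the paper derives the corollary immediately from Theorem \ref{monotone independence thm} via the single observation that $G_i\in\cb_i$, so that (M1) and (M2), being universally quantified over elements of the $\cb_i$, apply in particular to polynomials in the $G_i$. Your added remark about working with non-unital $*$-algebras (since $I\notin\cb_i$) is a sensible piece of bookkeeping that the paper leaves implicit, but it does not change the argument.
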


\subsection{Partitions of a finite set}
\label{sec2b}

Let $S$ be a non empty linearly ordered finite set. The collection $\pi=\{B_1,\ldots, B_p\}$ is a \emph{partition} of the set $S$ if, for any $1\leq i \leq p$, one has
\begin{equation*}
B_i\cap B_j=\emptyset \quad \text{if $i\neq j$,} \qquad \bigcup_{i=1}^pB_i=S.\\
\end{equation*}
$B_i$ are called blocks of the partition $\pi$. The number of blocks of $\pi$ is denoted by $|\pi|$ and the set of all the partitions of $S$ is $P(S)$. In the special case $S=[m]:=\{1,\ldots,m\}$ we write the set of the partitions on it as $P(m)$. A partition $\s$ has \emph{crossing} if it contains at least two distinct blocks $B_i$ and $B_j$, and elements $v_1,v_2\in B_i$, $w_1,w_2\in B_j$ s.t. $v_1<w_1<v_2<w_2$. Otherwise, it has \emph{no crossing}. It is called a \emph{pair partition} if each block $B_h$ contains exactly two elements. In this case, for any $h$ we write $B_h=(l_h,r_h)$, where $l_h<r_h$, $l_1<l_2<\ldots< l_{|S|/2}$ and $|S|$ is the (necessarily even) number of elements in $S$.
Once $m$ is even, say $m=2n$, then $P_2(2n)$ and $NC_2(2n)$ will denote the sets of pair partitions and non crossing pair partitions (i.e partitions without any crossing), respectively. Each $\p\in P_2(2n)$ can be simply denoted by $(l_h,r_h)_{h=1}^n$. The cardinality of $NC_2(2n)$ is the $n$-th Catalan number $C_n$, i.e.
\begin{equation*}
C_n:=\frac{1}{n+1}\binom{2n}{n}.
\end{equation*}
We recall that in the lattice $\bz^2$ a Dyck path is a path which starts at $(0,0)$, makes steps either of the form $(n,k)\rightarrow (n+1,k+1)$ or of the form $(n,k)\rightarrow (n+1,k-1)$, ends on the $x$-axis and never goes strictly below the $x$-axis.
One has that for every positive integer $n$, the number of Dyck paths with $2n$ steps is equal to the $C_n$ \cite{nicaspe}.

For a partition $\pi \in P(m)$, with $k$ blocks $\pi=\{B_1, \ldots , B_k\}$, one has a natural partial  order $\preceq_{\pi}$ on the blocks given by $B_i\preceq_{\pi}B_j$ if $\min B_i\leq \min B_j \leq \max B_j \leq \max B_i$, where $\min B$ (resp. $\max B$) denotes the minimal (resp. maximal) element of the block $B$.

For a finite subset $I\subset\mathbb{N}$ and a partition $\pi \in P(m)$, with $k$ blocks $\pi=\{B_1, \ldots , B_k\}$ one defines the \emph{label function} $L: \pi \rightarrow I$ by $L(B_j)\in I$ for $1\leq j \leq k$, and call the pair $(\pi, L(\pi))$a \emph{labeled partition with labels} $L(\pi):=\{L(B_1), \ldots , L(B_k)\}$.
\begin{defin}
\label{weakmonlab}
The label function $L$ is \emph{weakly monotonic}
if it preserves the ordering of the blocks, i.e. if
\[
B_i\preceq_{\pi} B_j \Longrightarrow L(B_i)\leq L(B_j)
\]
\end{defin}
Using the above notations, the set of weakly monotonic ordered labeled partitions is denoted by $PWMO(I,m)$. As soon as $\pi\in NC_2(2n)$, we will use the notation $NC_2WMO(I,2n)$.

\bigskip

We end the subsection by recalling the following notations introduced in \cite{QCLMono}, which will be useful in the next. For $n,N\in \mathbb{N}$ with $1\leq n\leq N$, we take
\begin{equation*}
\mathfrak{M}_p(2n,N):=\{k:\{1,\ldots,2n\}\rightarrow \{1,\ldots N\} \mid |k^{-1}(j)|=2, j\in R(k)\}
\end{equation*}
as the set of all $2$-$1$ maps with range $R(k)$ included in $\{1,\ldots N\}$.

If moreover $(l_h,r_h)_{h=1}^n$ is a pair partition on $\{1,\ldots, 2n\}$, we denote by $\mathfrak{M}_p((l_h,r_h)_{h=1}^n,N)$ the collection of $k$ in $\mathfrak{M}_p(2n,N)$ s.t. $k(l_h)=k(r_h)$ for any $h$. Very often in the sequel, the generic $k(l)$ will be simply denoted as $k_l$ without further mentioning.

\subsection{Cauchy transform of a measure}
\label{sec2c}

In the last part of this section we briefly mention some basic facts about the Cauchy Transform of a probability measure. Let $\m$ be a probability measure defined on the Borel $\s$-field over $\br$. The moment sequence associated with $\mu$ is denoted by $(m_n(\mu))_{n\geq1}$. For each $z\in\bc$
$$
\cam(z):=\sum_{n=0}^{\infty}z^nm_n(\mu)
$$
is called moment generating function, which is considered as a formal power series if the series is not absolutely convergent.

From now on $\mathbb{C}^+$ and $\mathbb{C}^-$ will be the the upper and lower complex half-planes, respectively.  The Cauchy transform of $\m$ is defined as
\begin{equation}
\label{cauchydef}
\cg_{\mu}(z):=\int_{-\infty}^{+\infty}\frac{\mu(dx)}{z-x}.
\end{equation}
The map
$$
H_{\mu}(z):=\frac{1}{\cg_{\mu}(z)}
$$
is called the reciprocal Cauchy transform of $\m$.
$\cg_{\mu}(z)$ is analytic in $\bc\setminus \supp({\mu})$, and since $\cg_{\mu}(\overline{z})=\overline{\cg_{\mu}(z)}$, we can restrict its domain on $\mathbb{C}^+\cup \mathbb{R}$. In this region it uniquely determines $\m$, in the way below summarized (see e.g. \cite{horaob}).

\noindent (1) The limit
\begin{equation}
\label{stiedef}
-\frac{1}{\pi}\lim_{y\to 0^{+}}\Im\cg_{\mu}(x+iy)
\end{equation}
exists for a.e. $x\in\br$. If $\gpg(x)$ is defined as the above limit if this limit exists, and $0$ otherwise, then $\gpg(x)\,dx$ is the absolutely continuous part of $\mu$.

\noindent (2) $\cg_{\mu}(z)$ has a pole in $z=a\in\mathbb{R}$ if and only if $a$ is an isolated point of $\supp({\mu})$. In this case
$$
\m=c\d_a+(1-c)\nu,\,\,\,\,\, 0\leq c \leq 1
$$
and $\nu$ is a probability measure for which $\supp({\nu})\cap\{a\}=\emptyset$. Furthermore, $c=\text{Res}_{z=a}\cg_{\mu}(z)$.

The formula \eqref{stiedef} is called \emph{Stieltjes inversion formula}.

We finally report the moment generating function and the Cauchy transform for the standard (i.e supported on $[-2,2]$) Wigner semicircle law, respectively
\begin{align}
\begin{split}
\label{gfCtWigner}
\cam_1(z)&=\frac{1-\sqrt{1-4z^2}}{2z^2} \\
\cg_1(z)&=\frac{1}{2}\bigg(z-z\sqrt{1-\frac{4}{z^2}}\bigg),
\end{split}
\end{align}
the latter being recovered from the first by the well known identity
$$
\cg_{\mu}(z)=\frac{1}{z}\cam\bigg(\frac{1}{z}\bigg).
$$
We end the section by recalling the following result.
\begin{thm}
\cite{Mu}
\label{conv}
Let $a_{1},a_{2},\ldots,a_{n}\in \ga$ be monotonically independent self-adjoint
random variables, in the natural order, over a $(C^*)$ $*$-algebra $\ga$ with a state $\f$. If $\m_{a_i}$ is the probability distribution of ${a_i}$  under the state $\f$, then
$$
H_{\m_{a_1+a_2+\ldots + a_n}}(z)=H_{\m_{a_1}}(H_{\m_{a_2}}(\cdots H_{\m_{a_n}}(z)\cdots)).
$$
\end{thm}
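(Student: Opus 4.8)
The plan is to reduce the statement to the case of two summands and then to settle that case by a resolvent expansion in which monotone independence collapses the higher–index variable to a scalar equal to its Cauchy transform.

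\emph{Reduction to $n=2$.} Writing $b:=a_2+\cdots +a_n$, I would first invoke the associativity of monotone independence: the pair consisting of $\ca:=\cb_{a_1}$ (carrying the smaller label) and the algebra $\cc$ generated by $a_2,\dots ,a_n$ again obeys the defining relations of monotone independence, of the form \eqref{M1}–\eqref{M2}, with $\ca$ first, while $a_2,\dots ,a_n$ remain monotone independent in the natural order inside $\cc$. Both facts follow from \eqref{M1}–\eqref{M2} by induction on word length. Granting the two–variable identity $H_{\m_{a_1+b}}=H_{\m_{a_1}}\circ H_{\m_{b}}$ together with the inductive hypothesis $H_{\m_{b}}=H_{\m_{a_2}}\circ\cdots\circ H_{\m_{a_n}}$, the full formula follows at once.

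\emph{The two–variable case.} Let $\ca$ (first) and $\cc$ (second) be monotone independent with respect to $\f$, and let $a\in\ca$, $c\in\cc$ be self–adjoint; put $w:=H_{\m_c}(z)$. Since $H=1/\cg$, the goal $H_{\m_{a+c}}=H_{\m_{a}}\circ H_{\m_{c}}$ is equivalent to $\cg_{\m_{a+c}}(z)=\cg_{\m_{a}}(w)$, i.e. the second variable acts on the first purely through the reparametrisation $z\mapsto H_{\m_c}(z)$. I would expand the resolvent for $|z|$ large,
\[
\frac{1}{z-a-c}=\sum_{k\ge 0}\frac{1}{z-c}\Big(a\,\frac{1}{z-c}\Big)^{k},
\]
and use the key collapse identity: writing $\frac{1}{z-c}=\frac1z I+S$ with $S:=\frac{c}{z(z-c)}\in\cc$, the operator relation \eqref{M1} gives $a\,S\,a=\f(S)\,a^{2}$, whence, since $\f(S)=\cg_{\m_c}(z)-\tfrac1z$,
\[
a\,\frac{1}{z-c}\,a=\Big(\tfrac1z+\f(S)\Big)a^{2}=\cg_{\m_c}(z)\,a^{2}.
\]
Thus every interior factor $\frac{1}{z-c}$, flanked by the lower–index $a$'s, reduces to the scalar $\cg_{\m_c}(z)$ and merges the surrounding $a$'s, while the two boundary factors are handled by the factorisation \eqref{M2}; the $k$‑th term then equals $\f(a^{k})\,\cg_{\m_c}(z)^{k+1}$. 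Summing and recognising the moment generating function $\cam_{a}$ of $\m_a$,
\[
\cg_{\m_{a+c}}(z)=\cg_{\m_c}(z)\sum_{k\ge 0}\f(a^{k})\,\cg_{\m_c}(z)^{k}=\cg_{\m_c}(z)\,\cam_{a}\big(\cg_{\m_c}(z)\big),
\]
and the identity $\cg_{\m_a}(u)=\frac1u\cam_a(1/u)$ converts the right–hand side into $\cg_{\m_a}\big(H_{\m_c}(z)\big)$, which is exactly what was needed.

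\emph{Main obstacle.} The crux is the resummation, and the genuine subtlety is that the resolvents $\frac{1}{z-c}$ do \emph{not} lie in the non‑unital algebra $\cc$: one must split off the scalar component $\frac1z I$ before \eqref{M1}–\eqref{M2} can be invoked, and then check that the boundary factors and the nested occurrences of $c$ organise into precisely the powers of $\cg_{\m_c}(z)$ recorded above. A secondary point is the associativity used in the reduction, which lets one group the upper indices $a_2,\dots,a_n$ into a single second variable. Finally, because all these manipulations are first justified as convergent Neumann series for $|z|$ large, I would promote the resulting equality of analytic functions to all of $\bc^{+}$ by analytic continuation, using that $\cg_{\m}$ is analytic off the compact support of $\m$.
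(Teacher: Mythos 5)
The paper offers no proof of this statement: Theorem \ref{conv} is imported verbatim from Muraki \cite{Mu}. So the only meaningful comparison is with the standard proof of the monotone convolution formula, and your blind reconstruction is a correct version of essentially that argument. The Neumann expansion $\frac{1}{z-a-c}=\sum_{k\geq 0}\frac{1}{z-c}\bigl(a\,\frac{1}{z-c}\bigr)^{k}$ converges in norm for $|z|>\|a\|+\|c\|$; your collapse identity is right, and you correctly isolate the real subtlety, namely that \eqref{M1}--\eqref{M2} apply only to the non-unital algebras, so the resolvent must be split as $\frac1z I+S$ with $S=\frac{c}{z(z-c)}$, which is a norm limit of polynomials in $c$ without constant term (the relations \eqref{M1}--\eqref{M2} pass to this closure by continuity of $\f$ and of multiplication --- say this explicitly). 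The bookkeeping checks out: interior resolvents collapse iteratively via $a^{j}\,\frac{1}{z-c}\,a=\cg_{\m_c}(z)\,a^{j+1}$, the two boundary resolvents contribute $\bigl(\frac1z+\f(S)\bigr)^{2}=\cg_{\m_c}(z)^{2}$ through the V-shaped factorizations $\f(Sa^{k}S)=\f(S)^{2}\f(a^{k})$, $\f(a^{k}S)=\f(a^{k})\f(S)$, $\f(Sa^{k})=\f(S)\f(a^{k})$ permitted by \eqref{M2}, which yields $\cg_{\m_{a+c}}(z)=\cg_{\m_c}(z)\,\cam_{a}(\cg_{\m_c}(z))=\cg_{\m_a}(H_{\m_c}(z))$ for $|z|$ large; and the extension to all of $\bc^{+}$ is legitimate because $\Im H_{\m}(z)\geq \Im z$, so both sides are analytic on $\bc^{+}$ and agree on an open set.

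The one step you treat too briskly is the reduction to $n=2$: that the $*$-algebra generated by $a_1$ and the algebra generated by $a_2,\ldots,a_n$ are again monotone independent (associativity of monotone independence) is a genuine lemma, not a one-line consequence of \eqref{M1}--\eqref{M2}. The induction you gesture at does work --- in an alternating word with all indices $\geq 2$ flanked by index-$1$ letters, the leftmost occurrence of the maximal index is a strict local maximum (adjacent letters carry distinct indices), so \eqref{M1} collapses it to a scalar times a strictly shorter word, terminating at a V-shaped word where \eqref{M2} applies --- but one must also verify that the scalars so collected reproduce $\f$ of the inner word, which requires running the same reduction inside $\f(\cdot)$, where \eqref{M1} is unavailable at the ends of the word and \eqref{M2} must take over. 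Since this associativity is itself a known result, either write out that double induction or cite it; with that point filled in, your proof is complete.
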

Moreover, Theorem 3.5 in \cite{Mu} ensures that for any pair of probability measures $\m,\nu$ on $\mathbb{R}$, there exists a unique
distribution $\r$ on $\mathbb{R}$ such that
$$
H_{\r}(z) = H_\m(H_\nu(z)).
$$
$\r$ is called the monotonic convolution of $\m$ and $\nu$.

\section{Moments for field position operators}
\label{sec3}

In this section we will investigate the vacuum distribution for sums $\sum_{k\in I}G_k$ of position operators $G_k=A_k+A^{\dag}_k$, where $I\subset \bn$, $|I|<\infty$, and $|\cdot|$ denotes the cardinality.

To this end we define
$$
\mu_{I,n}:=\om_{\Om}\bigg(\bigg(\sum_{k\in I} G_k\bigg)^{n}\bigg)
$$
the $n$-th moments for such sums. Whenever $I=[m]$ we use the notation $\mu_{m,n}$.
For each $i\geq 1$, as in the previous section it is useful to denote creators and annhilators as $A_i^{1}:=A_i^{\dag}$ and $A_i^{-1}:=A_i$, respectively.

We first treat the case $m=1$. If $\eps=(\eps(1),\ldots,\eps(n))$, $\eps(j)\in\{-1,1\}$ for any $j=1,\ldots,n$, then
\begin{equation}
\label{momG1}
\mu_{1,n}:=\om_{\Om}((G_1)^n)=\sum_{(\eps(1),\ldots,\eps(n))\in\{-1,1\}^n}\om_{\Om}(A_1^{\eps(1)}\cdots A_1^{\eps(n)}).
\end{equation}
If $n$ is odd the vacuum expectation above is null. Therefore from now on we will consider only even moments. If, instead $n$ is even, say $2n$, by an abuse of notation, both the following conditions are necessary for the non vanishing of \eqref{momG1}

\bigskip

\noindent (1) $\sum_{j=1}^{2n}\eps(j)=0$,

\bigskip
\noindent (2) $\sum_{j=h}^{2n}\eps(j)\geq 0$, for $h=1,\ldots, 2n$.

\bigskip

\noindent From now on, we will use $\{-1,1\}^{2n}_+$ to denote the entire collections of strings $\eps\in \{-1,1\}^{2n}$ satisfying (1) and (2) above.
\begin{prop}
\label{wig}
The distribution measure of $G_1$ with respect to the vacuum state $\om_{\Om}$ is the standard Wigner law.
\end{prop}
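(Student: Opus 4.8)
The plan is to compute all moments $\mu_{1,n}=\om_{\Om}((G_1)^n)$ and to recognise them as the moments of the standard Wigner law, which, being compactly supported, is uniquely determined by its moment sequence. Since the odd moments already vanish, it suffices to treat the even ones and to show $\mu_{1,2n}=C_n$; indeed the Wigner law $\nu$ satisfies $\int x^{2n}\,\nu(dx)=C_n$ and $\int x^{2n+1}\,\nu(dx)=0$.

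First I would observe that the subspace $\sp\{e_1^k:\ k\geq 0\}$ (with the convention $e_1^0:=\Om$) is invariant under both $A_1$ and $A_1^{\dag}$, and that these vectors are orthonormal. On this subspace the operators carry no normalisation factor: one has $A_1^{\dag}e_1^k=e_1^{k+1}$ for every $k\geq 0$ (the admissibility condition $1\geq 1$ always holds), $A_1 e_1^k=e_1^{k-1}$ for $k\geq 1$, and $A_1\Om=0$. Thus $A_1^{\dag}$ and $A_1$ act exactly as one--mode raising and lowering operators with unit coefficients.

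Next, expanding $(G_1)^{2n}$ as in \eqref{momG1}, I would evaluate each summand $\om_{\Om}(A_1^{\eps(1)}\cdots A_1^{\eps(2n)})$ by applying the operators to $\Om$ from the right and tracking the height of the resulting vector $e_1^{s}$. Writing $s_h:=\sum_{j=h}^{2n}\eps(j)$ (so $s_{2n+1}:=0$), a raising step sends height $s_{h+1}$ to $s_{h+1}+1$, while a lowering step sends it to $s_{h+1}-1$ and annihilates the vector exactly when $s_{h+1}=0$. Hence the term is nonzero precisely when the height never becomes negative, i.e. $s_h\geq 0$ for all $h$, and the word returns to the vacuum, i.e. $s_1=0$; these are exactly conditions (2) and (1) defining $\{-1,1\}^{2n}_+$. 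Because every admissible step carries coefficient $1$, each such string contributes exactly $1$, so $\mu_{1,2n}=|\{-1,1\}^{2n}_+|$. Identifying an admissible string with a Dyck path of $2n$ steps ($+1$ an up--step, $-1$ a down--step), this count is the number of Dyck paths with $2n$ steps, namely $C_n$, as recalled in Section \ref{sec2b}.

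Finally, matching $\mu_{1,2n}=C_n$ and $\mu_{1,2n+1}=0$ with the Wigner moments, and invoking that a compactly supported measure is determined by its moments, yields the claim; equivalently one checks that $\sum_{n\geq 0}\mu_{1,2n}z^{2n}$ coincides with the moment generating function $\cam_1$ in \eqref{gfCtWigner}. The point to get right is the bookkeeping of the third step: that conditions (1)--(2) are precisely the Dyck condition, and that the complete absence of combinatorial weights (in contrast with the bosonic case) makes every admissible term contribute $1$, so that $\mu_{1,2n}$ is literally the cardinality $C_n$ rather than a weighted sum.
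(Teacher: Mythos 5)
Your proof is correct and follows essentially the same route as the paper: both expand $\om_{\Om}\big((G_1)^{2n}\big)$ over alternating words in $A_1,A_1^{\dag}$, observe that the nonvanishing terms are precisely the strings in $\{-1,1\}^{2n}_+$, each contributing exactly $1$, and conclude $\mu_{1,2n}=C_n$. The only cosmetic differences are that you verify the unit weights via the explicit ladder action on the invariant subspace $\sp\{e_1^k:\ k\geq 0\}$ and count admissible strings as Dyck paths, whereas the paper invokes Lemma \ref{3a} and counts via the bijection of $\{-1,1\}^{2n}_+$ with $NC_2(2n)$ --- an identification it sets up here deliberately, since $\eps\equiv(l_h,r_h)_{h=1}^n$ is reused throughout Section \ref{sec3}.
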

\begin{proof}
From \eqref{momG1}, $(1)$ and $(2)$ above, it is sufficient to prove
$$
\sum_{\eps\in\{-1,1\}^{2n}_+}\om_{\Om}(A_1^{\eps(1)}\cdots A_1^{\eps(2n)})=C_n
$$
for any $n$. Indeed, let $\{l_1,\ldots,l_p\}$ be the (possibly empty) subset of $[2n]$ such that $\eps(l_j)=-1$, for any $j$, and $l_1<\cdots<l_p$.
For the special case $\varepsilon\in\{-1,1\}^{2n}_+$, conditions (1) and (2) above immediately entail $p=n$, $l_1=1$
and $l_n<2n$. In addition, to each $\varepsilon\in\{-1,1\}^{2n}_+$ one can
uniquely associate a non crossing pair partition of the set consisting of $2n$ elements. Namely, the first block is obtained just pairing the first
consecutive $(-1,1)$ appearing on the string starting from the left. The
second pairing will arise by canceling the two indices previously paired,
and then reproducing the previous scheme to the remaining ones, and so on. As $|NC_{2}(2n)|=C_n$, the thesis follows after noticing that, for any $\eps\in\{-1,1\}^{2n}_+$, $\om_\Om(A_1^{\eps(1)}\cdots A_1^{\eps(2n)})=1$ as a consequence of Lemma \ref{3a}.
\end{proof}
Since the one-to-one correspondence described in the proof above between $\eps\in\{-1,1\}^{2n}_+$ and $\pi:=(l_h,r_h)_{h=1}^n\in NC_{2}(2n)$, from now on we will often use the natural identification $\eps\equiv (l_h,r_h)_{h=1}^n$.

The arguments used in the proof of Proposition \ref{wig} automatically give the field position operators $G_i$ are identically distributed w.r.t. the vacuum for each $i\geq 1$.

As a consequence of Corollary \ref{monotone independence cor} and Proposition \ref{wig}, one obtains that investigating the distribution of $\sum_{i=1}^m G_i$ coincides with finding the $m$-fold monotonic convolution of the standard Wigner law with itself. We point out that, reasoning as in the case $m=1$, one finds that for any $m$, $\m_{[m],2n+1}=0$.

\subsection{Distribution of sum of two position operators}
\label{sec3.1}
In this subsection we consider the case $m=2$, i.e. $G_1+G_2$.

The next result gives us that the vacuum moments for $G_1+G_2$ can be computed by counting the weakly monotone ordered non crossing pair partitions of the set $[2n]$. This automatically entails all the $\sum_{k\in I} G_k$, $|I|=2$ are identically distributed under the state $\om_\Om$. Although it is a particular case of the successive Theorem \ref{momcasem}, we put here its direct proof for the convenience of the reader.
\begin{prop}
\label{momcase2}
For any $n\geq1$, one has
\begin{equation}
\label{moments2a}
\mu_{2,2n}=\om_{\Om}\bigg(\big(G_1+G_2\big)^{2n}\bigg)=\big|NC_2WMO([2],2n)\big|\,.
\end{equation}
In addition, if $d_n:=\mu_{2,2n}$, the following recursive formula holds
\begin{equation}
\label{dn}
d_n=\sum_{k=1}^{n}d_{n-k}(d_{k-1}+C_{k-1}),\,\,\,\,\,\,\,\,  d_0=1,
\end{equation}
where the $C_k$ are the Catalan numbers.
\end{prop}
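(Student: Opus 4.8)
The plan is to prove the two assertions of the proposition separately: first the combinatorial identity \eqref{moments2a}, and then the recursion \eqref{dn}.

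For \eqref{moments2a}, I would expand $(G_1+G_2)^{2n}=(A_1+A_1^{\dag}+A_2+A_2^{\dag})^{2n}$ into a sum of $4^{2n}$ monomials $A_{i_1}^{\eps(1)}\cdots A_{i_{2n}}^{\eps(2n)}$ with $i_j\in\{1,2\}$ and $\eps(j)\in\{-1,1\}$, and show that each vacuum expectation $\om_{\Om}(A_{i_1}^{\eps(1)}\cdots A_{i_{2n}}^{\eps(2n)})$ is either $0$ or $1$, the value $1$ occurring exactly for the monomials encoding an element of $NC_2WMO([2],2n)$. Applying the word to $\Om$ from the right, each creator $A_i^{\dag}$ prepends $e_i$ to the current simple tensor while each annihilator $A_i$ removes a leading $e_i$; this is a push/pop stack discipline with the top of the stack being the leftmost tensor factor. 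Exactly as in Proposition \ref{wig}, a non-vanishing contribution forces $\eps$ to lie in $\{-1,1\}^{2n}_+$, hence to determine a non-crossing pair partition $\pi=(l_h,r_h)_{h=1}^n$, with the annihilator at $l_h$ and the creator at $r_h$.

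Next I would read off the two label constraints from this stack picture. Within a block $(l_h,r_h)$ the pop performed at $l_h$ must remove precisely the $e_{i_{r_h}}$ pushed at $r_h$, so by $A_iA_j^{\dag}=0$ for $i\ne j$ one needs $i_{l_h}=i_{r_h}$; this upgrades a surviving monomial to a labelled partition $(\pi,L)$ with $L(B_h):=i_{l_h}$. For the nesting constraint, whenever $B\preceq_{\pi}B'$ the build-up (reading right to left) pushes $e_{L(B')}$ directly on top of $e_{L(B)}$, and by the very definition of $A_i^{\dag}$ (which kills a tensor whose leading index strictly exceeds $i$) this step is non-zero precisely when $L(B)\le L(B')$; non-crossing guarantees that each pop meets the matching top, so no further obstruction arises and the accumulated coefficient is exactly $1$. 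Hence $\om_{\Om}(\cdots)=1$ iff the labelling is weakly monotone, i.e. iff $(\pi,L)\in NC_2WMO([2],2n)$, and summing over all monomials gives \eqref{moments2a}. I expect the most delicate point to be exactly this step, namely showing that the vacuum expectation is governed precisely by weak monotonicity of the labels; in particular the bookkeeping when several consecutive indices coincide (runs of the form $e_i^{k}$) must be checked carefully so that the push/pop identifications remain valid and genuinely produce the factor $1$ rather than a vanishing term, and this is where I would invoke Lemma \ref{3a}.

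Finally, for the recursion \eqref{dn} I would decompose $NC_2WMO([2],2n)$ according to the unique block $B_0$ with minimal element $1$, writing $B_0=(1,2k)$ for some $1\le k\le n$; such a block is necessarily outermost. Non-crossing splits the remaining blocks into those nested inside $B_0$, forming a pairing of the $k-1$ inner pairs, and those lying entirely to the right of $2k$, forming a weakly monotone labelled pairing of the remaining $n-k$ pairs and contributing $d_{n-k}$, since these outer blocks are $\preceq_{\pi}$-incomparable to $B_0$ and to its interior. Conditioning on the label $c:=L(B_0)\in\{1,2\}$: if $c=1$ the inner blocks are only required to be $\ge 1$, hence range over all weakly monotone labellings of $k-1$ pairs, contributing $d_{k-1}$; if $c=2$, weak monotonicity forces every inner label to equal $2$, leaving only the choice of the underlying non-crossing pairing, hence $C_{k-1}$. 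Summing the product $(d_{k-1}+C_{k-1})\,d_{n-k}$ over $1\le k\le n$ yields \eqref{dn}, with the base case $d_0=1$ corresponding to the empty partition. Relative to the first part, this last step is a routine first-return decomposition of labelled non-crossing pairings.
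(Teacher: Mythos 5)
Your proposal is correct and follows the paper's argument essentially step for step: part one reduces, exactly as in Proposition \ref{wig}, to label-matched non-crossing pairings and shows a word survives with value $1$ precisely when the labelling is weakly monotone (the paper encodes your push/pop bookkeeping in the operator identities \eqref{AAdagAAdag} and \eqref{AAAdagAdag} derived from Lemma \ref{3a}, rather than tracking the vector), and part two is the identical first-return decomposition at the block $(1,2k)$ with the split $d_{k-1}+C_{k-1}$ according to whether the outer label is $1$ or $2$. One small imprecision, not a gap: for nested blocks $B\preceq_{\pi}B'$ the push of $e_{L(B')}$ lands directly on top of the label of the \emph{immediate} parent of $B'$, not necessarily of $B$ itself, so the constraint $L(B)\le L(B')$ for a general nested pair follows by transitivity along the nesting chain, which your argument implicitly uses.
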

\begin{proof}
We first prove \eqref{moments2a}. Arguing as in Proposition \ref{wig}, one immediately obtains
\begin{equation}
\label{thmfor1}
\om_{\Om}\bigg(\big(G_1+G_2\big)^{2n}\bigg)=\sum_{(l_h,r_h)_{h=1}^n\in NC_2(2n)}
\sum_{k_1,\ldots,k_{2n}\in[2]}\om_{\Om}(A_{k_1}^{\eps(1)}\cdots A_{k_{2n}}^{\eps(2n)}),
\end{equation}
where we used the natural identification of $\eps\in\{-1,1\}^{2n}_+$ with $(l_h,r_h)_{h=1}^n$. In this case one replaces $\{k_1,\ldots, k_{2n}\}$ with $\{k_{l_1},k_{r_1},\ldots, k_{l_n},k_{r_n}\}$. Reasoning as in \cite{QCLMono}, Lemma 3.3., one finds the r.h.s. of \eqref{thmfor1} is not automatically null only when $k_{l_h} = k_{r_h}$ for any $h = 1,\ldots,n$.
Further, from \eqref{jgeqk}, \eqref{alpha} and definition of creators and annihilators, for each $j,k\geq1$, it follows
\begin{equation}
\label{AAdagAAdag}
A_jA_j^{\dag}A_kA_k^{\dag}=A_lA^{\dag}_l \qquad l:=\min\{j,k\}
\end{equation}
and
\begin{equation}
\label{AAAdagAdag}
 A_jA_kA_k^{\dag}A_j^{\dag}=\a_{k,j}A_jA^{\dag}_j,
 \end{equation}
where $\a_{k,j}$ is defined in Lemma \ref{3a}. This gives that, for fixed $\eps\in\{-1,1\}^{2n}_+$ and $k_1,\ldots, k_{2n}\in [2]$, any $\om_{\Om}(A_{k_1}^{\eps(1)}\cdots A_{k_{2n}}^{\eps(2n)})$, if not null, is reduced to
$$
\om_{\Om}(A_{k_h} A_{k_{h}}^{\dag})=1,
$$
where $k_h:=\min\{k_{l_j} \mid k_{l_j}=k_{r_j}, j=1,\ldots,n\}$.
Thus it is sufficient to prove that only the non crossing pair partitions which are labeled in the weakly monotone order survive into the sums in \eqref{thmfor1}.

Indeed, if we take $\eps=(l_h,r_h)_{h=1}^n$ as the interval partition (i.e. $r_h=l_{h+1}$ for any $h$), we have no inner nor outer blocks, i.e. $\eps$ is automatically weakly monotone ordered. As a consequence we suppose $\eps=(l_h,r_h)_{h=1}^n$ is not an interval partition, but still non crossing. Let $j$ be the minimum in $[n]$ for which $r_j\neq l_j+1$ and consider the (non crossing) pair partition $\pi_j$ given by all the blocks $B_h:=(l_h,r_h)$ for which $l_j<l_h<r_h<r_j$. If $\pi_j$ is an interval partition from \eqref{AAdagAAdag} and \eqref{AAAdagAdag}, it follows
\begin{equation*}
A_{k_{l_j}}A_{k_{l_{j+1}}}A_{k_{l_{j+1}}}^\dag \cdots A_{k_{l_j}}^{\dag}=\a_{k_{l_h}, k_{l_j}}A_{k_{l_j}}A_{k_{l_j}}^{\dag},
\end{equation*}
where $k_{l_h}=\min\{k_{l_p} \mid B_p\in \pi_j\}$. Therefore the partition $\pi:= ((l_j,r_j),\pi_j)$ has to be weakly monotone ordered.
If instead $\pi_j$ is not an interval partition, we take any block $B_p:=(l_p,r_p)$ in $\pi_j$ s.t. $r_p\neq l_p+1$ and argue as above. Iterating the same procedure for each block with no consecutive indices in the partition induced by $\eps$, \eqref{moments2a} follows.

Finally we show that the number of non crossing weakly monotone ordered pair partitions with 2 possible labels satisfy \eqref{dn}. In fact, it trivially holds when $n=1$.

\noindent Assume now \eqref{dn} is true for any $s<n$ and fix $\eps\in\{-1,1\}_{+}^{2n}$. Here
\begin{equation*}
h:=\min\big\{m>0 \mid \sum_{i=1}^{m}\eps(i)=0\big\}
\end{equation*}
is a well defined and even integer. As a shorthand notation we put $\cd_n:=NC_2WMO([2],2n)$. Moreover, we denote by $\cd_n^{k}$ the non crossing pair partitions in $NC_2WMO([2],2n)$ such that the sum of $\eps(i)$ vanishes for the first time after $2k$ terms. Since any element in $\cd_n^{k}$ belongs to $\{-1,1\}_{+}^{2n}$, one finds
\begin{equation}
\label{Dnkset}
\cd_n^{k}=\big\{ \eps\in NC_2WMO([2],2n)\mid
 \sum_{i=1}^{2k}\eps(i)=0 \, \text{and} \,  \sum_{i=1}^{l}\eps(i)<0\, \forall l<2k\big\}.
\end{equation}
As a consequence
\begin{equation*}
\cd_n=\bigcup_{k=1}^{n}\cd_n^{k}
\end{equation*}
and, therefore, since the $\cd_n^{k}$ are pairwise disjoint,
\begin{equation*}
\big|\cd_n\big|=\sum_{k=1}^n \big|\cd_n^k\big|.
\end{equation*}
By \eqref{moments2a} one has $d_n=|\cd_n|$ and, further we put $d_n^k:=|\cd_n^k|$. Then \eqref{dn} follows as soon as we prove the following equality
\begin{equation}
\label{dnk}
d_n^k=d_{n-k}(d_{k-1}+C_{k-1})\,.
\end{equation}
To this aim, fix $k=1,\ldots,n$, and split the set $S=\{1,\ldots,2n\}$ into two subsets $S=S^{\prime}\cup S^{\prime\prime}$, where $S^{\prime}=\{1,\ldots,2k\}$ and $S^{\prime\prime}=\{2k+1,\ldots,2n\}$. Then
\begin{equation*}
 d_n^k =\big|NC_2WMO^{\prime}([2],S^{\prime})\big|\cdot \big|NC_2WMO([2],S^{\prime\prime})\big|,
\end{equation*}
where $NC_2WMO^{\prime}([2],S^{\prime})$ is the subset of $NC_2WMO([2],S^{\prime})$ given by the partitions $\pi=\{B_1,\ldots,B_k\}$ for which $B_1=(1,2k)$. Here, $|NC_2WMO([2],S^{\prime\prime})|=| NC_2WMO([2],2(n-k))|=d_{n-k}$ by induction assumption. Notice that for each $h\in\{2,\ldots,k\}$ any $B_h\in\pi$ is inside $B_1$.
We write $L(B_1)=i_1$ and $L(B_h)=i_h$ for each $h$. The following two cases are allowed for the label $i_1$

\bigskip

\noindent (1) $i_1$=1. Here all the labels $i_h$ are allowed for the blocks $B_h$, and thus the cardinality of $NC_2WMO^{\prime}([2],S^{\prime})$ reduces to $|NC_2WMO([2],S^{\prime})|=d_{k-1}$ by induction assumption.

\bigskip

\noindent (2) $i_1=2$. In this circumstance $i_h=2$ for each $h=2,\ldots,k$ since $\pi$ is weakly monotone ordered.\\
Therefore in such a case $|NC_2WMO^{\prime}([2],S^{\prime})|$ coincides with $|NC_2(2k-2)|=C_{k-1}$.

Summing up, one has
$$
\big|NC_2WMO^{\prime}([2],S^{\prime})\big|=d_{k-1}+C_{k-1}
$$
and \eqref{dnk} follows.
\end{proof}
The next result gives us the Cauchy transform and the moment generating function of the vacuum distribution of $G_1+G_2$, which are the building blocks to achieve the distribution itself. The proof can be performed by means of the monotone convolution or, as a consequence of the previous result, directly using the recurrence formula for moments.
\begin{lem}
\label{momcau}
The moment generating function and the Cauchy transform for $G_1+G_2$ are respectively given by
\begin{equation}
\label{genfunG1+G2}
\cam_2(z)=\frac{1-z^2\cam_1(z)-\sqrt{(z^2\cam_1(z)-1)^2-4z^2}}{2z^2}
\end{equation}
and
\begin{equation}
\label{cauchyG1+G2}
\cg_2(z)= \frac{1}{2}\bigg(z-\cg_1(z)\mp \sqrt{(\cg_1(z)-z)^2-4}\bigg),
\end{equation}
where $\cam_1(z)$ and $\cg_1(z)$ are defined in \eqref{gfCtWigner}.
\end{lem}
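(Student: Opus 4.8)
The plan is to compute $\cg_2$ first and then pass to $\cam_2$ through the elementary identity $\cg_\mu(z)=\tfrac1z\cam(\tfrac1z)$ recalled after \eqref{gfCtWigner}. For $\cg_2$ I would use the monotone convolution machinery. By Corollary \ref{monotone independence cor} the operators $G_1,G_2$ are monotone independent in $(\cb,\om_\Om)$, and by Proposition \ref{wig} each is distributed according to the standard Wigner law $\nu$; hence Theorem \ref{conv} applies with $n=2$ and $\m_{a_1}=\m_{a_2}=\nu$, giving
\begin{equation*}
H_{\m_{G_1+G_2}}(z)=H_1(H_1(z)),\qquad H_1:=1/\cg_1 .
\end{equation*}

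The computational engine is the algebraic equation satisfied by the semicircular Cauchy transform. From \eqref{gfCtWigner} one checks that $\cg_1$ solves $\cg_1(z)^2-z\,\cg_1(z)+1=0$; dividing by $\cg_1(z)$ immediately yields the convenient closed form $H_1(z)=1/\cg_1(z)=z-\cg_1(z)$. Setting $w:=H_1(z)=z-\cg_1(z)$ and using the same relation once more, $H_1(H_1(z))=1/\cg_1(w)$, so that
\begin{equation*}
\cg_2(z)=\frac{1}{H_1(H_1(z))}=\cg_1(w)=\cg_1\big(z-\cg_1(z)\big).
\end{equation*}
Solving $\cg_1(w)^2-w\,\cg_1(w)+1=0$ for $\cg_1(w)$ and inserting $w=z-\cg_1(z)$, and noting $(z-\cg_1(z))^2=(\cg_1(z)-z)^2$, reproduces \eqref{cauchyG1+G2}. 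The moment generating function \eqref{genfunG1+G2} then follows by writing $\cam_2(z)=\tfrac1z\cg_2(1/z)$ and using $\cg_1(1/z)=z\cam_1(z)$, after absorbing the outer factor as $z\sqrt{(z\cam_1(z)-1/z)^2-4}=\sqrt{(z^2\cam_1(z)-1)^2-4z^2}$.

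Alternatively, and without invoking monotone independence, I would derive the same formulas directly from Proposition \ref{momcase2}. Writing $D(x):=\sum_{n\geq0}d_n x^n$ and $C(x):=\sum_{n\geq0}C_n x^n$, the recurrence \eqref{dn} translates (after the shift $k\mapsto k-1$ and recognition of a Cauchy product) into the functional equation
\begin{equation*}
D(x)-1=x\,D(x)\big(D(x)+C(x)\big),
\end{equation*}
which is quadratic in $D(x)$. Since $\cam_1(z)=C(z^2)$ and $\cam_2(z)=D(z^2)$, solving this quadratic with $x=z^2$ gives \eqref{genfunG1+G2} at once, and \eqref{cauchyG1+G2} follows again from $\cg_2(z)=\tfrac1z\cam_2(1/z)$.

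Each of these steps is, individually, routine algebra; the one point requiring care is the choice of branch of the square root (the sign $\mp$ in \eqref{cauchyG1+G2}). This must be fixed by the analytic normalizations $\cg_2(z)\sim 1/z$ as $z\to\infty$ in $\bc^+$ (equivalently $\cam_2$ analytic at the origin with $\cam_2(0)=1$), together with the fact, guaranteed by Theorem \ref{conv} and the uniqueness statement of Muraki, that $H_1\circ H_1$ is genuinely the reciprocal Cauchy transform of a probability measure. I expect checking that the selected branch is consistent throughout the relevant domain, rather than the algebra itself, to be the only genuinely delicate part.
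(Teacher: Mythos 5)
Your proposal is correct; in fact it contains two complete arguments, and it inverts the paper's presentation. Your ``alternative'' derivation --- translating the recurrence \eqref{dn}, via a Cauchy product, into the quadratic $z^2\cam_2(z)^2+\cam_2(z)\,(z^2\cam_1(z)-1)+1=0$, fixing the branch by $\lim_{z\to 0}\cam_2(z)=d_0=1$, and then passing to $\cg_2$ through $\cg_2(z)=\frac{1}{z}\cam_2\big(\frac{1}{z}\big)$ --- is precisely the proof the paper writes out, and the paper's choice is deliberate: as stressed in the introduction, it keeps the entire $m=2$ analysis free of monotone independence. Your primary route, via Theorem \ref{conv}, is exactly what the paper relegates to the one-line remark following its proof. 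It is shorter and cleaner: the relation $\cg_1(z)^2-z\,\cg_1(z)+1=0$ gives $1/\cg_1(z)=z-\cg_1(z)$ (this is the paper's \eqref{recur}), whence $\cg_2(z)=\cg_1\big(z-\cg_1(z)\big)$ and \eqref{cauchyG1+G2} follow by solving the same quadratic at the shifted argument. What this buys, beyond brevity, is the a priori guarantee (Muraki's existence and uniqueness of the monotone convolution) that the resulting expression is the reciprocal Cauchy transform of an actual probability measure, which is what legitimizes selecting the branch by $\cg_2(z)\sim 1/z$ at infinity; it also anticipates the paper's own treatment of general $m$, since identity \eqref{cauchym2} in the proof of Theorem \ref{momcaucasem} is exactly your composition step iterated. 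Your caution about the sign is consistent with the paper, which likewise leaves the $\mp$ unresolved in \eqref{cauchyG1+G2} and only fixes it (according to the sign of the real part of the argument) later, in the proof of Theorem \ref{dens}.
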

\begin{proof}
As the vacuum law for $G_1+G_2$ is symmetric, we only deal with even moments. From \eqref{dn}, we have
\begin{equation*}
\begin{split}
  \cam_2(z)=&1+\sum_{n=1}^{\infty}z^{2n}\bigg(\sum_{k=1}^n d_{n-k}(d_{k-1}+C_{k-1})\bigg) \\
         =& 1+\sum_{n=1}^{\infty}z^{2n}\bigg(\sum_{k=1}^n d_{n-k}d_{k-1}\bigg)+\sum_{n=1}^{\infty}z^{2n}\bigg(\sum_{k=1}^n d_{n-k}C_{k-1}\bigg)\\
         =&1+z^2\sum_{n=1}^{\infty}\sum_{k=1}^nz^{2(n-k)}d_{n-k}z^{2(k-1)}d_{k-1} \\
         &+z^2\sum_{n=1}^{\infty}\sum_{k=1}^nz^{2(n-k)}d_{n-k}z^{2(k-1)}C_{k-1}\\
         =&1+z^2 \sum_{k=1}^{\infty}z^{2(k-1)}d_{k-1}\sum_{n=k}^{\infty}z^{2(n-k)}d_{n-k}\\
         &+z^2\sum_{k=1}^{\infty}z^{2(k-1)}C_{k-1}\sum_{n=k}^{\infty}z^{2(n-k)}d_{n-k}\\
         =&1+z^2\cam_2(z)^{2}+z^2\cam_2(z)\cam_1(z).
\end{split}
\end{equation*}
As a consequence,
\begin{equation*}
    z^2\cam_2(z)^2+\cam_2(z)(z^2\cam_1(z)-1)+1=0
\end{equation*}
and \eqref{genfunG1+G2} follows after taking into account that $\lim_{z\to0}\cam(z)=d_0=1$.
Exploiting the usual identity $\cg_2(z)=\frac{1}{z}\cam_2\big(\frac{1}{z}\big)$, \eqref{genfunG1+G2} gives
\begin{equation*}
\begin{split}
\cg_2(z)&= \frac{1}{2}\bigg(z-\cg_1(z)-z\sqrt{\frac{(\cg_1(z)-z)^2-4}{z^2}}\bigg)\\
   &=\frac{1}{2}\bigg(z-\cg_1(z)\mp \sqrt{(\cg_1(z)-z)^2-4}\bigg)\,.
   \end{split}
\end{equation*}
\end{proof}
We observe that \eqref{cauchyG1+G2} can be also obtained by the relation $\cg_2(z)=\cu\big(\frac{1}{\cu(z)}\big)$, i.e. by using Theorem \ref{conv}.

The Stieltjes inversion formula \eqref{stiedef} gives the absolutely continuous part of the law of $G_1+G_2$. Its computation needs some preliminary arguments. We start with the evaluation of the limits for the map $z\mapsto z\sqrt{1-\frac{4}{z^2}}$. It seems convenient to apply some arguments which appeared in \cite{Wysdef}, here reported for the reader's convenience. Let $x$ and $y$ the real and imaginary parts of $z$, respectively, and $a+ib:=z\sqrt{1-\frac{4}{z^2}}$. Then, since $\cg_1$ maps $\mathbb{C}^{+}$ into $\mathbb{C}^-$, from \eqref{gfCtWigner} it follows that $y > 0$ implies $b > 0$. Furthermore, since $ab = xy$, $a$ and $x$ have the same sign. This entails the functions $a = a(x, y)$ and $b = b(x, y)$ have the following limits.

\bigskip

\noindent (1) If $|x|\leq2$, then $\lim_{y\to0^{+}} a(x; y) = 0$ and $\lim_{y\to0^{+}}b(x; y) =\sqrt{4-x^2}$;

\bigskip

\noindent (2) If $x>2$, then $\lim_{y\to0^{+}} a(x; y) =\sqrt{x^2-4}$ and $\lim_{y\to 0^{+}}b(x; y) = 0$;

\bigskip

\noindent (3) If $x<-2$ then $\lim_{y\to0^{+}} a(x; y) =-\sqrt{x^2-4}$  and $\lim_{y\to 0^{+}}b(x; y) = 0$.

\bigskip

We are now able to prove the main result for the sum of two position operators. As usual we reduce the matter to the case $G_1+G_2$.
\begin{thm}
\label{dens}
The distribution for $G_1+G_2$ is absolutely continuous with the density $\gpg$ given by
\begin{equation}
\label{density}
\gpg(x)=\begin{cases}
\frac{1}{4\pi}\big(\sqrt{\sqrt{100-16x^2}-x^2+10}-\sqrt{4-x^2}\big) &\text{if $|x|\leq2$}\\
\frac{1}{4\pi}\sqrt{-2x^2-2|x|\sqrt{x^2-4}+20} &\text{if $\displaystyle 2\leq |x|\leq\frac{5}{2}$}\\
0 & \text{if $\displaystyle |x|\geq\frac{5}{2}$}.
\end{cases}
\end{equation}
\end{thm}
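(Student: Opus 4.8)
The plan is to read off the density from the Stieltjes inversion formula \eqref{stiedef} applied to the Cauchy transform $\cg_2$ obtained in Lemma \ref{momcau}. Concretely, the absolutely continuous density is
\[
\gpg(x)=-\frac{1}{\pi}\lim_{y\to 0^{+}}\Im\,\cg_2(x+iy),
\]
so the whole computation reduces to extracting the boundary value of $\Im\,\cg_2$ along the real axis from \eqref{cauchyG1+G2}. Writing $z=x+iy$ and $a+ib=z\sqrt{1-\frac{4}{z^2}}$, from \eqref{gfCtWigner} one has $\cg_1(z)-z=-\frac12\big((x+a)+i(y+b)\big)$ and $z-\cg_1(z)=\frac12\big((x+a)+i(y+b)\big)$, so that the required limits follow at once from the boundary limits (1)--(3) recorded above for the map $z\mapsto z\sqrt{1-\frac{4}{z^2}}$.

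The core of the argument is the regime $|x|\le 2$, where $a\to 0$, $b\to\sqrt{4-x^2}$. Here a direct expansion gives the limit
\[
(\cg_1(z)-z)^2-4\;\longrightarrow\; w:=\tfrac12\big(x^2-10+ix\sqrt{4-x^2}\big).
\]
The key algebraic simplification is the identity $(x^2-10)^2+x^2(4-x^2)=100-16x^2$, which yields the clean modulus $|w|=\frac12\sqrt{100-16x^2}$, with $\Re w=\frac12(x^2-10)$. Feeding these into the half-angle formula for the complex square root produces the nested radical
\[
\Im\sqrt{w}=\tfrac12\,\sgn(x)\sqrt{\sqrt{100-16x^2}-x^2+10},
\]
while $\Im\,(z-\cg_1(z))\to\frac12\sqrt{4-x^2}$. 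Combining these in \eqref{cauchyG1+G2} gives $\Im\,\cg_2$ on $[-2,2]$ as a combination of $\sqrt{4-x^2}$ and the nested radical.

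At this point the genuine subtlety appears, and I expect it to be the main obstacle: one must fix the ambiguous sign $\mp$ in \eqref{cauchyG1+G2} and control the factor $\sgn(x)$ coming from $\Im\sqrt{w}$. The branch is pinned down by the requirement that $\cg_2$ be an honest Cauchy transform, i.e. that it map $\mathbb{C}^{+}$ into $\mathbb{C}^{-}$ and behave like $1/z$ at infinity; this selects the branch for which $-\frac{1}{\pi}\Im\,\cg_2\ge 0$ and reproduces the first line of \eqref{density} for $x>0$. For $x<0$ the analytic continuation of $\sqrt{(\cg_1(z)-z)^2-4}$ changes sheet, so rather than tracking the cut I would invoke the \emph{symmetry} of the vacuum law of $G_1+G_2$ (its odd moments vanish, as observed after Proposition \ref{wig}): the density must be even, and since the stated expression depends only on $x^2$ and $|x|$, the case $x<0$ follows from $x>0$ by symmetry. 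This cleanly circumvents the branch bookkeeping on the negative axis.

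It then remains to treat the two outer regimes and to rule out atoms. For $2\le|x|\le\frac52$ one has $b\to 0$ and $a\to\sgn(x)\sqrt{x^2-4}$, so $z-\cg_1(z)$ becomes real and $(\cg_1(z)-z)^2-4\to\frac12\big(x^2-10+|x|\sqrt{x^2-4}\big)$, which is a \emph{negative} real number on this interval (vanishing exactly at $|x|=\frac52$); its square root is purely imaginary, and after simplifying $\frac{1}{2\pi}\sqrt{\frac12}=\frac{\sqrt2}{4\pi}$ one recovers the second line of \eqref{density}. For $|x|>\frac52$ the same radicand is positive real and $z-\cg_1(z)$ is real, whence $\Im\,\cg_2\to 0$ and the density vanishes. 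Finally I would check continuity at $|x|=2$ and at $|x|=\frac52$ (both matchings are immediate from the explicit formulas) to confirm the branch choices, and verify via part (2) of the Cauchy-transform facts that $\cg_2$ has no poles on $\br$, so $\supp$ carries no isolated atoms and the law is purely absolutely continuous on $[-\frac52,\frac52]$.
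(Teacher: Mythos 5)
Your computation of the density coincides, essentially step for step, with the paper's own proof: Stieltjes inversion applied to \eqref{cauchyG1+G2}, the substitution $a+ib=z\sqrt{1-\frac{4}{z^2}}$ together with the boundary limits (1)--(3), branch selection by requiring that $\cg_2$ map $\mathbb{C}^{+}$ into $\mathbb{C}^{-}$ (the paper resolves this as choosing ``$-$'' for $x>0$ and ``$+$'' for $x<0$), reduction to $x\geq 0$ by evenness, and the case split at $|x|=2$ and $|x|=\frac{5}{2}$. Your algebra is correct: the identity $(x^2-10)^2+x^2(4-x^2)=100-16x^2$, the half-angle extraction of $\Im\sqrt{w}$, and the sign analysis showing the radicand $\frac12\big(x^2-10+|x|\sqrt{x^2-4}\big)$ is negative on $[2,\frac52)$ and vanishes at $|x|=\frac52$ all check out. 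The only methodological difference is cosmetic: you pass to the limit in $(\cg_1(z)-z)^2-4$ first and then take the square root, whereas the paper computes $\Im\sqrt{A+iB}$ at $y>0$ via the explicit formulas for $P$ and $Q$ and only then lets $y\to 0^{+}$; away from the branch cut these operations commute.

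The genuine gap is your final step. Fact (2) of Subsection \ref{sec2c} says only that poles of $\cg_2$ on $\br$ correspond to \emph{isolated} points of the support; the absence of poles therefore excludes isolated atoms, but it excludes neither an atom embedded in the continuous support nor a singular continuous component, so ``no poles, hence purely absolutely continuous'' is a non sequitur. Note also that \eqref{stiedef} recovers only the absolutely continuous part of the measure, so the density formula by itself does not certify that this part is the whole law. The paper is explicit about this: its proof of Theorem \ref{dens} states that the absence of atoms is postponed to Theorem \ref{lemabscon}, where absolute continuity is derived from the de la Vall\'ee Poussin criterion (\cite{Sch}, Theorem F.6), by which the singular part of $\nu_2$ is carried by $\{x\in\supp(\nu_2):\ |\Im(\cg_2(x+i0))|=+\infty\}$. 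The repair is immediate from what you already have: your boundary computation shows $\Im\cg_2(x+i0)$ is finite for every real $x$, so that set is empty; replace the appeal to fact (2) by this criterion (or simply by a citation of Theorem \ref{lemabscon}) and the argument closes.
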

\begin{proof}
Here we prove that the density of $G_1+G_2$ is given in \eqref{density}, and postpone the proof of the absence of atoms in Theorem \ref{lemabscon}.\\
We first compute $\Im{\cg_2(z)}$. As above we use the following change of variables $x+iy:=z$ and $a+ib:=z\sqrt{1-\frac{4}{z^2}}$. Thus $2\cg_1(x+iy) = (x-a)+i(y-b)$, and in \eqref{cauchyG1+G2} we have
\begin{equation*}
\cg_2(x+iy)= \frac{x+a+i(y+b)}{4}\mp\frac{1}{4}\sqrt{[x+a+i(y+b)]^2-16}\,.
\end{equation*}
For the imaginary part of the square root above, after defining
\begin{align*}
&A:=(x+a)^2-(y+b)^2-16 \\
&B:=2(x+a)(y+b),
\end{align*}
it follows
\begin{equation*}
\label{p+iq}
P+iQ:=\sqrt{[x+a+i(y+b)]^2-16}=\sqrt{A+iB}.
\end{equation*}
As a consequence, $P$ and $Q$ satisfy the following identities
\begin{align*}
&P=\frac{1}{\sqrt{2}}\sqrt{\sqrt{A^2+B^2}+A}, \\
&Q=\frac{\sgn B}{\sqrt{2}}\sqrt{\sqrt{A^2+B^2}-A}.
\end{align*}
As previously noticed $y+b>0$ and $a$ and $x$ have the same sign. Thus $\sgn B=\sgn{x}$ and one has
\begin{align}
\begin{split}
\label{imcauchy}
\Im{\cg_2(x+iy)}&=\frac{y+b}{4}\mp \frac{Q}{4} \\
&=\frac{y+b}{4}\mp \frac{\sgn{x}}{4\sqrt{2}}\sqrt{\sqrt{A^2+B^2}-A}.
\end{split}
\end{align}
Since both $y$ and $b$ are positive, the imaginary term in $\cg_2(x+iy)$ belongs to the complex lower half plane if and only if we choose "$-$" when $x>0$ and "$+$" when $x<0$, on the r.h.s. of \eqref{imcauchy}. Consequently
\begin{equation*}
\Im{\cg_2}(x+iy)=\frac{y+b}{4}-\frac{1}{4\sqrt{2}}\sqrt{\sqrt{A^2+B^2}-A}.
\end{equation*}
If $C:=\sqrt{A^2+B^2}$, one has
$$
\Im{\cg_2}(x+iy)=\frac{y+b}{4}-\frac{1}{4\sqrt{2}}\sqrt{C- (x-a)^2+(y+b)^2+16}.
$$
Taking into account the limits for the functions $a(x,y)$ and $b(x,y)$ as computed above, the following cases may occur. Here we consider only the case $x\geq 0$, as the density is an even function.

\bigskip

\noindent (1) $0<x\leq2$. Then from \eqref{stiedef}
\begin{align*}
&\lim_{y\to 0^+}\Im{\cg_2(x+iy)} \\
&=\frac{\sqrt{4-x^2}}{4}-\frac{1}{4}\sqrt{\sqrt{100-16x^2}-x^2+10}.
\end{align*}
For $x=0$ we have
$$
\cg_1(iy)=\frac{i}{2}\big(y-\sqrt{y^2+4}\big)
$$
and, by choosing "$-$" in \eqref{cauchyG1+G2}
$$
\cg_2(iy)=\frac{i}{4}\bigg(y+\sqrt{y^2+4}-\sqrt{2y^2+2y\sqrt{y^2+4}+20}\bigg).
$$

Then $\lim_{y\to0}\Im{\cg_2(iy)}=\frac{1}{2}-\frac{1}{4}\sqrt{20}=\gpg(0)$.

\bigskip

\noindent (2) $x>2$. In this case
\begin{equation*}
 \lim_{y\to 0^+}\Im{\cg_2(x+iy)} =-\frac{1}{4}\sqrt{\big|10-x^2-x\sqrt{x^2-4}\big|-x^2-x\sqrt{x^2-4}+10}.
\end{equation*}
Therefore one has two subcases

\bigskip

\noindent (2a) $2<x<\frac{5}{2}$, which gives
\[
 \lim_{y\to 0^+}\Im{\cg_2(x+iy)} = -\frac{1}{4}\sqrt{20-2x^2-2x\sqrt{x^2-4}}\,.
 \]
\newline
\noindent (2b) $x\geq\frac{5}{2}$, which entails
$$
\lim_{y\to 0^+}\Im{\cg_2(x+iy)}=0\,.
$$
The thesis then follows from \eqref{stiedef}.
\end{proof}
In Figure \ref{fig:dens} one finds the plot of $\gpg$.
\begin{figure}[b!]
  \includegraphics[scale=0.2]{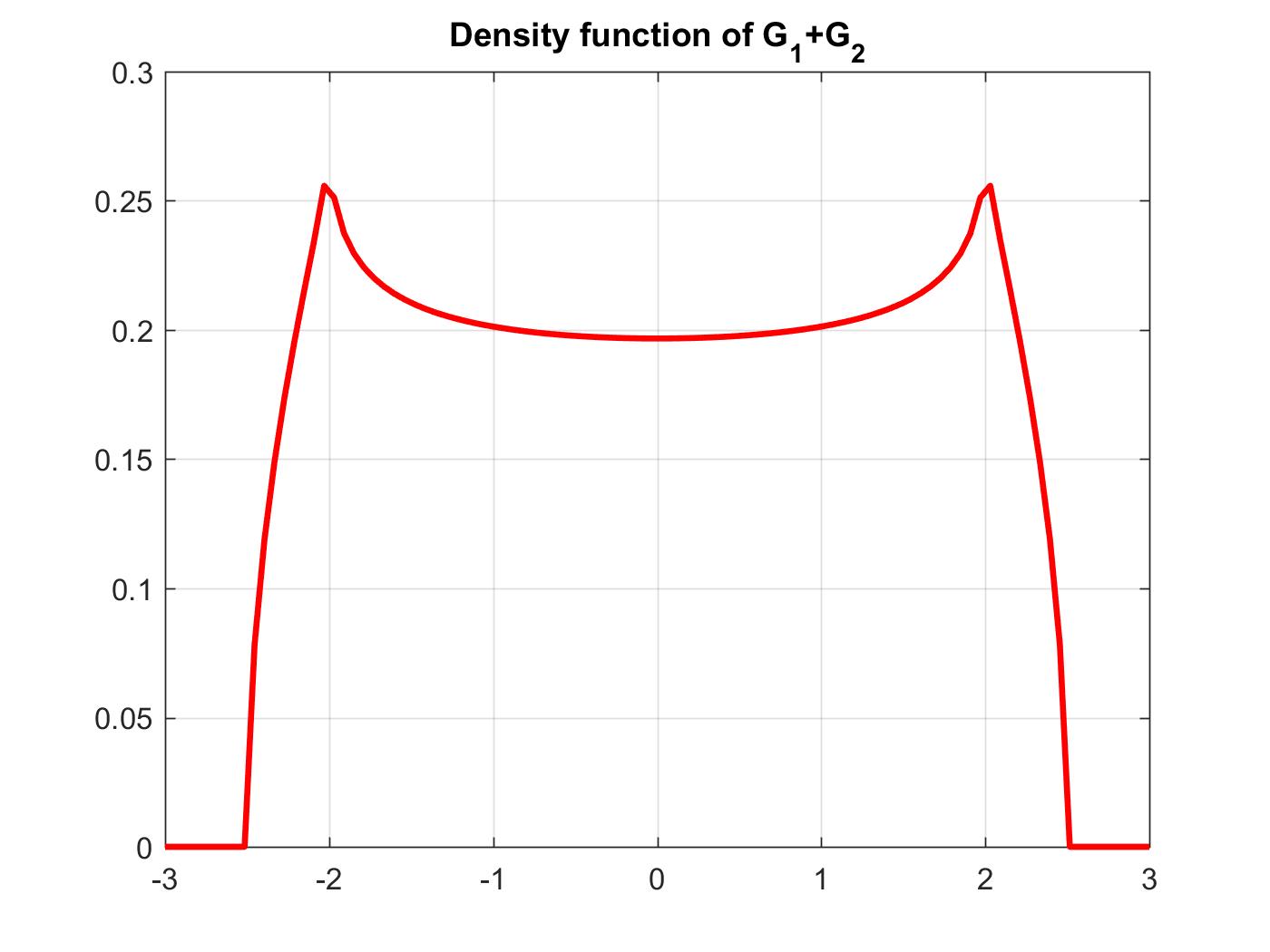}
  \caption{}
  \label{fig:dens}
\end{figure}

\subsection{On the distribution of sum of $m\geq 3$ position operators}
\label{sec3.2}
Now we turn to the case $m>2$, i.e. we deal with the problem of finding the vacuum distribution for $G_1 +\cdots + G_m$. We start with the generalization of Proposition \ref{momcase2}.
\begin{thm}
\label{momcasem}
For any $m,n\geq1$, one has
\begin{equation}
\label{moments2}
\mu_{m,2n}:=\om_{\Om}\bigg(\bigg(\sum_{k=1}^mG_k\bigg)^{2n}\bigg)=|NC_2WMO([m],2n)|.
\end{equation}
In addition, if one denotes $d_n^{(m)}:=\mu_{m,2n}$, then
\begin{equation}
\label{dn^m}
d_n^{(m)}=\sum_{k=1}^{n}d_{n-k}^{(m)}\sum_{j=1}^md_{k-1}^{(j)},
\end{equation}
where $d_0^{(j)}=1$ for each $j$ and $d_n^{(1)}=C_n$ for any $n$.
\end{thm}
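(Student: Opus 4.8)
The plan is to mirror the two-step structure of the proof of Proposition \ref{momcase2}: first reduce the operator moment to a combinatorial count, and then establish the recurrence by a first-return decomposition. For the identity \eqref{moments2}, I would expand, exactly as in \eqref{thmfor1},
$$
\om_{\Om}\bigg(\bigg(\sum_{k=1}^m G_k\bigg)^{2n}\bigg)=\sum_{(l_h,r_h)_{h=1}^n\in NC_2(2n)}\sum_{k_1,\ldots,k_{2n}\in[m]}\om_{\Om}(A_{k_1}^{\eps(1)}\cdots A_{k_{2n}}^{\eps(2n)}),
$$
using the identification of $\eps\in\{-1,1\}^{2n}_+$ with a non crossing pair partition furnished by Proposition \ref{wig}. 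Reasoning as in Lemma 3.3 of \cite{QCLMono}, a summand can be nonzero only when $k_{l_h}=k_{r_h}$ for every $h$. The identities \eqref{AAdagAAdag} and \eqref{AAAdagAdag}, themselves consequences of Lemma \ref{3a}, then collapse each surviving product to a single $A_{k_h}A_{k_h}^{\dag}$, and the factor $\a_{k,j}$ in \eqref{AAAdagAdag} vanishes precisely when an inner block carries a strictly smaller label than the block enclosing it. Arguing by induction on the nesting depth, exactly as in the proof of \eqref{moments2a}, the surviving configurations are then precisely the weakly monotone ordered labelings, yielding \eqref{moments2}. This step is essentially a routine upgrade of the $m=2$ argument, now with labels ranging over $[m]$ rather than $[2]$.

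For the recurrence \eqref{dn^m}, I would write $d_n^{(m)}=|NC_2WMO([m],2n)|$ and decompose according to the first return to the axis of the associated Dyck path, i.e. the unique $k$ for which $B_1=(1,2k)$ is the block containing position $1$. With $S'=\{1,\ldots,2k\}$ and $S''=\{2k+1,\ldots,2n\}$, the configurations factor: the structure on $S''$ is an arbitrary element of $NC_2WMO([m],2(n-k))$, contributing $d_{n-k}^{(m)}$, while on $S'$ the outer block $B_1$ encloses a non crossing pair partition of the $2(k-1)$ inner positions. The crucial point is the label count for this inner piece: if $B_1$ carries label $j$, weak monotonicity forces every inner block to take a label in $\{j,\ldots,m\}$, and since the number of such weakly monotone labelings depends only on the cardinality $m-j+1$ of the available label set, this count equals $d_{k-1}^{(m-j+1)}$. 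Summing over $j=1,\ldots,m$ and reindexing yields $\sum_{j=1}^m d_{k-1}^{(j)}$ for the inner contribution, whence $d_n^{(m)}=\sum_{k=1}^n d_{n-k}^{(m)}\sum_{j=1}^m d_{k-1}^{(j)}$; the initial data $d_0^{(j)}=1$ and $d_n^{(1)}=C_n$ come respectively from the empty moment and from Proposition \ref{wig}.

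The main obstacle I anticipate is justifying the relabeling invariance at the heart of the inner count: that weakly monotone labelings of a fixed non crossing pair partition by the ordered set $\{j,\ldots,m\}$ are in bijection with those by $\{1,\ldots,m-j+1\}$, so that their number is the $j$-independent quantity $d_{k-1}^{(m-j+1)}$. This holds because the unique order isomorphism $\{j,\ldots,m\}\to\{1,\ldots,m-j+1\}$ preserves the defining implication of Definition \ref{weakmonlab}, but it deserves a careful statement, since it is exactly what converts the $j$-dependent restricted count into $d_{k-1}^{(m-j+1)}$ and thereby upgrades the two-term sum $d_{k-1}+C_{k-1}$ of the $m=2$ case into the full sum $\sum_{j=1}^m d_{k-1}^{(j)}$.
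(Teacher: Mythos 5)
Your proposal is correct and follows essentially the same route as the paper: the reduction of the moment to $|NC_2WMO([m],2n)|$ is the same \emph{mutatis mutandis} upgrade of Proposition \ref{momcase2} that the paper invokes, and your first-return decomposition with $B_1=(1,2k)$, the split $S=S'\cup S''$, and the relabeling of the inner labels $\{j,\ldots,m\}$ onto $[m-j+1]$ giving $d_{k-1}^{(m-j+1)}$ is exactly the paper's argument (there phrased as an induction on $n$, though as you observe the count follows directly from \eqref{moments2} applied to the smaller sets). Your explicit justification of the relabeling invariance via the order isomorphism is a point the paper leaves implicit, and is a welcome clarification rather than a deviation.
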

\begin{proof}
One notices that, \emph{mutatis mutandis}, \eqref{moments2} is achieved reasoning as in Proposition \ref{momcase2}.
We then prove \eqref{dn^m} by an induction procedure on $n$.
Indeed, for $n=1$, \eqref{moments2} and \eqref{cr} give
$$
d_{1}^{(m)}= \big|NC_2WMO([m],2)\big|=m=
d_0^{(m)}\sum_{j=1}^md_{0}^{(j)}\,.
$$
Suppose \eqref{dn^m} holds for each $r<n$. We now check the identity for $r=n$. To this aim, for any $k=1,\ldots, n$ one defines
\begin{equation}
\label{Dnkm}
\cd_n^{k,(m)}:=\big\{\eps\in NC_2WMO([m],2n) \mid
 \sum_{i=1}^{2k}\eps(i)=0,  \sum_{i=1}^{l}\eps(i)<0\,\, \text{for}\,\, l<2k\big\}
\end{equation}
and denotes by $d_n^{k,(m)}$ its cardinality. As the sets $\cd_n^{k,(m)}$ are pairwise disjoint and their union is the whole collection of elements in $NC_2WMO([m],2n)$, it follows
$$
d_n^{(m)}=\sum_{k=1}^{n}d_n^{k,(m)}.
$$
Now we show that for any $k=1,\ldots,n$
\begin{equation*}
d_n^{k,(m)}=d_{n-k}^{(m)} \sum_{j=1}^m d_{k-1}^{(j)}.
\end{equation*}
Indeed, we split $S=\{1,\ldots,2n\}$ into the two subsets $S=S^{\prime}\cup S^{\prime\prime}$, where $S^{\prime}=\{1,\ldots,2k\}$ and $S^{\prime\prime}=\{2k+1,\ldots,2n\}$. Therefore
\begin{equation*}
d_n^{k,(m)}= \big|NC_2WMO^{\prime}([m],S^{\prime}) \big|\cdot  \big|NC_2WMO([m],S^{\prime\prime}) \big|,
\end{equation*}
where, as in the previous case, $NC_2WMO^{\prime}([m],S^{\prime})$ is the subset of $NC_2WMO([m],S^{\prime})$ given by the partitions $\pi=\{B_1,\ldots,B_k\}$ for which $B_1=(1,2k)$.\\
Now $|NC_2WMO([m],S^{\prime\prime})|=| NC_2WMO([m],2n-2k)|=d_{n-k}^{(m)}$ by induction assumption.

Notice that for each $h\in\{2,\ldots,k\}$ any $B_h\in\pi$ is inside $B_1$.
As in Proposition \ref{momcase2}, we label $L(B_1)= i_1$ and $L(B_h)= i_h$ for each $h$.

\noindent By definition $i_1=l$, for each $l=1,\ldots, m$. The weakly monotone ordering gives $i_h=l+r$, $r=0,\ldots, m-l$, and the number of partitions allowed $\big| NC_2WMO^{\prime}([m-l+1],2k-2)\big|$ reduces to
$$
\big| NC_2WMO([m-l+1],2k-2)\big|=d_{k-1}^{(m-l+1)}\,,
$$
the last equality coming from the induction assumption. As a consequence,
$$
\big|NC_2WMO^{\prime}([m],S^{\prime})\big|=\sum_{j=1}^m d_{k-1}^{(j)}.
$$
\end{proof}
In the next proposition we will recall and prove some properties of $d_n^{(m)}$ for any $n\geq0$.
\begin{prop}
\label{momentspoly}
For any $n\geq0$, $d_n^{(m)}$ is a polynomial of degree $n$ in the variable $m$ which does not contain constants when $n\geq1$. Moreover, $m=0$ is a root of multiplicity $1$.
\end{prop}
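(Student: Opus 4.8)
The plan is to prove all three assertions simultaneously by strong induction on $n$, viewing $m\mapsto d_n^{(m)}$ as the restriction to the positive integers of a polynomial $P_n$. Precisely, I would establish that for every $n$ there is a polynomial $P_n$ with $P_n(m)=d_n^{(m)}$ for all integers $m\geq1$ such that: (a) $\deg P_n=n$ with positive leading coefficient; (b) $P_n(0)=0$ whenever $n\geq1$; and (c) the coefficient of $m$ in $P_n$ is nonzero. The base cases are immediate: $d_0^{(m)}=1$ gives $P_0\equiv1$, and $d_1^{(m)}=m$ gives $P_1(m)=m$, which visibly satisfies (a)--(c).

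The inductive engine is the recurrence \eqref{dn^m}, which I rewrite as $d_n^{(m)}=\sum_{k=1}^{n}P_{n-k}(m)\,D_{k-1}(m)$, where $D_{k-1}(m):=\sum_{j=1}^{m}d_{k-1}^{(j)}=\sum_{j=1}^{m}P_{k-1}(j)$. By the inductive hypothesis $P_{k-1}$ has degree $k-1$ in its argument, so by the standard Faulhaber summation $D_{k-1}$ is a polynomial in $m$ of degree $k$ with positive leading coefficient and vanishing constant term (the empty sum forces $D_{k-1}(0)=0$). Multiplying by $P_{n-k}$, of degree $n-k$ with positive leading coefficient, yields a summand of degree exactly $n$; since all leading coefficients are positive there is no cancellation, so the full sum has degree exactly $n$ with positive leading coefficient, which is (a). Evaluating at $m=0$ and using $D_{k-1}(0)=0$ for every $k$ gives $P_n(0)=0$, which is (b).

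The delicate point is (c). In each product $P_{n-k}\,D_{k-1}$ the factor $D_{k-1}$ already vanishes at $0$, and for $k<n$ the factor $P_{n-k}$ also vanishes at $0$ by (b); hence only the term $k=n$ contributes to the coefficient of $m^{1}$, giving the clean reduction $[m^{1}]P_n=[m^{1}]D_{n-1}$. To analyse this linear coefficient I would pass to the binomial basis, writing $P_{n-1}(x)=\sum_{i\geq1}b_i\binom{x}{i}$ with $b_i=\Delta^{i}P_{n-1}(0)$. These $b_i$ are nonnegative: by \eqref{moments2} each $d_{n-1}^{(\cdot)}$ is a sum over $\pi\in NC_2(2(n-1))$ of order polynomials counting weakly monotone labelings of the (nesting-ordered) blocks, and the $i$-th finite difference of an order polynomial counts the \emph{surjective} such labelings onto a chain of length $i$; in particular $b_1=C_{n-1}>0$. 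The hockey-stick identity then gives $D_{n-1}(m)=\sum_{i\geq1}b_i\binom{m+1}{i+1}$, so that $[m^{1}]P_n=[m^{1}]D_{n-1}=\sum_{i\geq1}\frac{(-1)^{i-1}}{i(i+1)}\,b_i$.

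The main obstacle is to show that this alternating sum of nonnegative terms does not vanish. I would aim to prove it is in fact strictly positive; equivalently, since $D_{n-1}(-1)=0$ (apply the polynomial identity $D_{n-1}(m)-D_{n-1}(m-1)=P_{n-1}(m)$ at $m=0$ together with (b)), one has $D_{n-1}(m)=m(m+1)E(m)$ and the claim becomes $E(0)=[m^{1}]P_n\neq0$. Genuine cancellation across the $b_i$ does occur---already a two-term truncation of the alternating sum can be negative---so a soft domination estimate will not suffice, and I expect to need finer positivity information on the $b_i$. Here the reciprocity evaluation $P_n(-1)=(-1)^n$, which follows from order-polynomial reciprocity and the fact that the unique block poset that is an antichain corresponds to the single non-crossing non-nesting pair partition, together with $P_n(0)=0$ and $P_n(1)=C_n$, provides the additional leverage to constrain $P_n$ near the origin and conclude that $m=0$ is a simple root.
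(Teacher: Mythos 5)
Your parts (a) and (b) are correct and essentially reproduce the paper's argument for the degree statement: the paper likewise feeds the recurrence \eqref{dn^m} into Faulhaber's formula to see that each summand $d_{n-k}^{(m)}\sum_{j=1}^{m}d_{k-1}^{(j)}$ has degree $(n-k)+k=n$, and your remark that all leading coefficients are positive (so no cancellation can lower the degree) makes explicit a point the paper leaves tacit. One mild difference: the paper outsources the bare polynomiality of $m\mapsto d_n^{(m)}$ to Proposition 4.4 of \cite{HasSa}, whereas you obtain it directly from the recurrence, which is legitimate and self-contained. Your reduction of the linear coefficient, $[m^{1}]P_n=[m^{1}]D_{n-1}$, is also correct (your order-polynomial reading of $d_{n-1}^{(m)}$ via \eqref{moments2} is sound), and it isolates exactly the quantity on which the paper's own induction turns: in the paper's factorisation $d_n^{(m)}=m\,q_{n-1}(m)$ one has $q_{n-1}(0)=\theta_2(0)=[m^{1}]\sum_{j=1}^{m}d_{n-1}^{(j)}$, the same number.

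The genuine gap is the last assertion of the proposition, the simplicity of the root $m=0$, which your proposal does not prove. You honestly reduce it to the nonvanishing of the alternating sum $\sum_{i\geq1}\frac{(-1)^{i-1}}{i(i+1)}\,b_i$ and concede that nonnegativity of the $b_i$ cannot by itself exclude cancellation; the closing appeal to reciprocity is then not an argument. The three evaluations $P_n(-1)=(-1)^n$, $P_n(0)=0$, $P_n(1)=C_n$ are compatible with $P_n'(0)=0$ for every $n\geq3$, since a polynomial of degree $n$ has enough free coefficients to interpolate any three point values while having any prescribed derivative at $0$; so no amount of ``constraining $P_n$ near the origin'' by point evaluations can force $P_n'(0)\neq0$. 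Note that $[m^{1}]d_n^{(m)}$ is precisely the $2n$-th monotone cumulant of the Wigner law (compare the list $1,\frac{1}{2},\frac{1}{2},\frac{7}{12},\frac{2}{3},\frac{13}{20},\dots$ in the appendix), so the missing statement is exactly that the even monotone cumulants of the semicircle law never vanish. To be fair, the paper is terse at the very same spot: in its inductive step the nonvanishing $q_{n-1}(0)\neq0$ is asserted---the inductive hypothesis $q_{n-k-1}(0)\neq0$ is never actually used to derive it---rather than deduced. So your diagnosis of where the difficulty sits is accurate and your reformulation is illuminating, but as written your proposal establishes only the degree and the vanishing of the constant term, not the multiplicity-one claim.
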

\begin{proof}
Since \cite{HasSa}, Proposition 4.4, we just need to show the degree of the polynomial is $n$ and the last sentence in the statement.
The first assertion is true for $n=0$ and $n=1$ as $d_0^{(m)}=1$ and $d_1^{(m)}=m$ for any $m$. Suppose by induction that the degrees of $d_{n-k}^{(m)}$ and $d_{k-1}^{(j)}$ are $n-k$ and $k-1$, with indeterminate $m$ and $j$, respectively. Therefore for any $k=1,\ldots, n-1$
\begin{equation*}
\sum_{j=1}^{m}d_{k-1}^{(j)}=\sum_{j=1}^{m}\sum_{h=1}^{k-1}\a_h^{(j)} j^{k-h},
\end{equation*}
where $\a_h^{(j)}$ are complex numbers.\\
Using the Faulhaber's formula with Bernoulli coefficients $B_p$ such that $B_1=\frac{1}{2}$, one finds
\begin{equation}
\label{fau}
\sum_{j=1}^md_{k-1}^{(j)}=\a_1^{(m)}m^k+\g(m),
\end{equation}
$\g(m)$ being a polynomial of degree less than $k$. The statement then follows from \eqref{dn^m}.

Notice that $d_1^{(m)}$ vanishes in $0$ and such a root has multiplicity 1. Again we use an induction procedure and suppose that for any $1\leq k\leq n-1$ and any $m$
$$
d_{n-k}^{(m)}=m q_{n-k-1}(m),\,\,\,\,\,\,\,\, q_{n-k-1}(0)\neq 0.
$$
We now show the following factorisation
$$
d_n^{(m)}=m q_{n-1}(m),
$$
where $q_{n-1}(m)$ is a polynomial of degree $n-1$ on $m$ for which $q_{n-1}(0)\neq 0$.

In fact, from \eqref{dn^m} and \eqref{fau},
\begin{align*}
d_n^{(m)}=&\sum_{k=1}^{n-1}d_{n-k}^{(m)}\sum_{j=1}^m d_{k-1}^{(j)}+d_0^{(m)}\sum_{j=1}^m d_{n-1}^{(j)} \\
=& \sum_{k=1}^{n-1}m q_{n-k-1}(m)[\a_1^{(m)}m^k+\g_1(m)]+\b_1^{(m)}m^n+\g_2(m) \\
=& m \bigg[\sum_{k=1}^{n-1}m q_{n-k-1}(m)[\a_1^{(m)}m^{k-1}+\theta_1(m)]+\b_1^{(m)}m^{n-1}+\theta_2(m)\bigg],
\end{align*}
where $\theta_1(m)$ and $\theta_2(m)$ are both polynomials with constant term of degrees less than $k-1$ and $n-1$, respectively.
The proof thus ends by taking
$$
q_{n-1}(m):=\bigg[\sum_{k=1}^{n-1}m q_{n-k-1}(m)[\a_1^{(m)}m^{k-1}+\theta_1(m)]+\b_1^{(m)}m^{n-1}+\theta_2(m)\bigg].
$$
\end{proof}
Now we shall establish some properties of the moments generating function and the Cauchy transform for the sum $G_1+\cdots+G_m$, denoted, in analogy with the case $m=2$, by $\cam_m$ and $\cg_m$ respectively. Namely, for each $z\in \bc$
$$
\cam_m(z)=\sum_{n=0}^{\infty}z^{2n}d_n^{(m)}
$$
and
\begin{equation}
\label{caucaum}
\cg_m(z)=\frac{1}{z}\cam_m\bigg(\frac{1}{z}\bigg).
\end{equation}
Notice that $\cam_1(z)=\cam_1(z)$ and $\cg_1(z)=\cg_1(z)$.
Moreover, for $z\in\mathbb{C}$ one denotes
$$
L_m(z):=\sum_{k=1}^m \cam_k(z)
$$
and
$$
K_m(z):=\sum_{k=1}^m \cg_k(z).
$$
\begin{thm}
\label{momcaucasem}
For any $m\geq1$, $z\in\bc$ one has
\begin{align}
z^2{\cam_m}^2(z)+\cam_m(z)(z^2L_{m-1}(z)-1)+1 &=0,\label{genfunm}\\
\cg_m^2(z)+\cg_m(z)(K_{m-1}(z)-z)+1&=0.\label{cauchym}
\end{align}
\end{thm}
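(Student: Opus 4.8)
The plan is to derive \eqref{genfunm} directly from the moment recurrence \eqref{dn^m} by a generating-series manipulation, exactly in the spirit of the computation carried out in Lemma \ref{momcau} for $m=2$, and then to obtain \eqref{cauchym} from \eqref{genfunm} through the substitution $z\mapsto 1/z$ encoded in \eqref{caucaum}.

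First I would insert \eqref{dn^m} into the definition $\cam_m(z)=\sum_{n\geq0}z^{2n}d_n^{(m)}$, isolating the constant term $d_0^{(m)}=1$, to get
\begin{equation*}
\cam_m(z)=1+\sum_{n=1}^{\infty}z^{2n}\sum_{k=1}^{n}d_{n-k}^{(m)}\sum_{j=1}^m d_{k-1}^{(j)}.
\end{equation*}
Interchanging the finite $j$-sum with the series and writing $z^{2n}=z^2\,z^{2(n-k)}z^{2(k-1)}$, each $j$-summand becomes a Cauchy product. Reversing the order of summation over $k$ and $n$ (legitimate as a formal power series identity, and absolutely convergent for small $|z|$) gives
\begin{equation*}
\sum_{n=k}^{\infty}z^{2(n-k)}d_{n-k}^{(m)}=\cam_m(z),\qquad \sum_{k=1}^{\infty}z^{2(k-1)}d_{k-1}^{(j)}=\cam_j(z),
\end{equation*}
so that the whole expression collapses to $\cam_m(z)=1+z^2\cam_m(z)\sum_{j=1}^m\cam_j(z)$.

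Second, I would split off the $j=m$ term: since $\sum_{j=1}^m\cam_j(z)=\cam_m(z)+L_{m-1}(z)$, the last identity reads $\cam_m(z)=1+z^2\cam_m^2(z)+z^2\cam_m(z)L_{m-1}(z)$, which is precisely \eqref{genfunm} after transposing every term to one side.

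Finally, for \eqref{cauchym} I would substitute $z\mapsto 1/z$ in \eqref{genfunm} and use \eqref{caucaum}, which gives $\cam_k(1/z)=z\cg_k(z)$ for every $k$, hence $L_{m-1}(1/z)=z\,K_{m-1}(z)$. Replacing $\cam_m(1/z)$ by $z\cg_m(z)$, $(1/z)^2$ by $1/z^2$, and clearing the common factor $z^2$ turns \eqref{genfunm} into \eqref{cauchym}. I expect the only delicate point to be the bookkeeping in the generating-function rearrangement of the first step—keeping the index shifts consistent and correctly recognizing the two factors as $\cam_m$ and $\cam_j$—together with tracking the powers of $z$ in the final substitution; both are routine once the $m=2$ template of Lemma \ref{momcau} is followed.
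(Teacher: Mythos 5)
Your proof is correct, but it runs in the opposite direction from the paper's. The paper proves \eqref{cauchym} first, by induction on $m$ and using monotone independence: via Theorem \ref{conv} one has $\cf_m=\cf_{m-1}\circ\cf_1$ (where $\cf_k=1/\cg_k$), which combined with the Wigner identity $\cg_1(z)+1/\cg_1(z)=z$ yields the key relation $z-K_{m-1}(z)=\cg_m(z)+\frac{1}{\cg_m(z)}$ (formula \eqref{cauchym2}); from this \eqref{cauchym} is immediate, and \eqref{genfunm} is then obtained through \eqref{caucaum}. You instead derive \eqref{genfunm} directly from the combinatorial recurrence \eqref{dn^m} by the Cauchy-product rearrangement of Lemma \ref{momcau}, arriving at $\cam_m=1+z^2\cam_m\sum_{j=1}^m\cam_j$ and splitting off the $j=m$ term via $\sum_{j=1}^m\cam_j=\cam_m+L_{m-1}$, then pass to \eqref{cauchym} by $z\mapsto 1/z$; your index bookkeeping and the substitution step ($\cam_k(1/z)=z\cg_k(z)$, $L_{m-1}(1/z)=zK_{m-1}(z)$, whereupon the powers of $z$ cancel outright --- no common factor actually needs clearing) both check out. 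This is precisely the alternative the authors flag immediately after their proof, where they note that \eqref{genfunm}, and consequently \eqref{cauchym}, can be derived from \eqref{dn^m} without monotone convolution. As for what each route buys: yours is more elementary and self-contained, resting only on the moment recurrence and formal power-series manipulations valid for small $|z|$ (hence everywhere relevant by analyticity, the measures being compactly supported); the paper's route, besides proving the theorem, isolates the identity \eqref{cauchym2}, which is reused crucially in Theorems \ref{lemabscon} and \ref{am+1thm} --- though you recover it at no cost by dividing \eqref{cauchym} by $\cg_m(z)$, which is nonzero on $\bc^{+}$.
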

\begin{proof}
Indeed, for $m\geq1$ and $z\in\bc$, we preliminary show that
\begin{equation}
\label{cauchym2}
z-K_{m-1}(z)=\cg_m(z)+\frac{1}{\cg_m(z)}.
\end{equation}
To this purpose, one firstly denotes
$$
\cf_m(z):=\frac{1}{\cg_m(z)}
$$
and recall that the random variables $G_1,\ldots, G_m$ are monotonically independent, from Theorem \ref{conv} one has $\cf_m(z)=\cf_{m-1}(\cf_1(z))$ and $\cg_m(z)=\cg_{m-1}\big(\frac{1}{\cg_{1}(z)}\big)$. Furthermore, \eqref{gfCtWigner} gives $\cf_1(z)=\frac{1}{2}(z+\sqrt{z^2-4})$,
one consequently one has
\begin{equation}
\label{recur}
\cg_1(z)+\frac{1}{\cg_1(z)}=z.
\end{equation}
Suppose now \eqref{cauchym2} holds for each $p<m$. In this case
\begin{align*}
\cg_m(z)+\frac{1}{\cg_m(z)}=& (\cg_{m-1}+\cf_{m-1})(\cf_{1}(z)) \\
=&\cf_{1}(z)- K_{m-2}(\cf_{1}(z)) \\
=&z-\cg_1(z)-\sum_{k=2}^{m-1}\cg_k(z),
\end{align*}
the last equality following from \eqref{recur}. Thus \eqref{cauchym2} holds, and it immediately gives \eqref{cauchym}. Therefore \eqref{genfunm} is straightforwardly obtained from \eqref{caucaum}.
\end{proof}
One notices \eqref{genfunm} and consequently \eqref{cauchym} can be directly derived from \eqref{dn^m}, without using monotone convolution.

From now on we denote by $\nu_m$  the distribution of $G_1+\cdots G_m$ w.r.t. the vacuum $\om_\Om$. This means that $\nu_1$ is the standard Wigner law and $\nu_2$ is the absolutely continuous measure computed in Theorem \ref{dens}. Formula \eqref{cauchym2} is fundamental for proving the following
\begin{thm}
\label{lemabscon}
For any $m\geq 1$ and $I\subset \mathbb{N}$ such that $|I|=m$, the vacuum distribution of $\sum_{k\in I} G_k$, and consequently the $m$-fold monotone convolution of the standard Wigner law is a compactly supported absolutely continuous measure on the real line.
\end{thm}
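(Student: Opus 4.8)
The plan is to induct on $m$, carrying along the stronger inductive hypothesis that $\cg_m$ extends from $\mathbb{C}^+$ to a continuous function on $\mathbb{C}^+\cup\mathbb{R}$ taking finite values at every real point. The cases $m=1$ and $m=2$ are already in hand, through Proposition \ref{wig} and Theorem \ref{dens} respectively, and they exhibit exactly the behaviour to be propagated. The driving identity is the one furnished by monotone independence (Corollary \ref{monotone independence cor}) via Theorem \ref{conv} and recorded in \eqref{cauchym2}: since every $G_k$ carries the same standard Wigner law, the reciprocal Cauchy transforms compose as $\cf_m=\cf_{m-1}\circ\cf_1$, that is $\cg_m=\cg_{m-1}\circ\cf_1$, where by \eqref{gfCtWigner} one has $\cf_1(z)=\tfrac12\big(z+\sqrt{z^2-4}\big)$.

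First I would record the boundary behaviour of $\cf_1$. It extends continuously to $\mathbb{C}^+\cup\mathbb{R}$, maps $\mathbb{C}^+$ into $\mathbb{C}^+$, sends the bulk interval $(-2,2)$ into the open upper half-plane (there $\cf_1(x+i0^+)=\tfrac12(x+i\sqrt{4-x^2})$), and sends $\{|x|\ge2\}$ into $\mathbb{R}$. Composing this with the continuous boundary extension of $\cg_{m-1}$ granted by induction produces at once a continuous boundary extension of $\cg_m=\cg_{m-1}\circ\cf_1$ on $\mathbb{C}^+\cup\mathbb{R}$.

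Next I would rule out atoms, hence exclude real poles of $\cg_m$ as in the analysis following \eqref{cauchydef}. A real pole would be an $x$ with $\cf_m(x+i0^+)=0$. For $x\in(-2,2)$ we have $\cf_1(x+i0^+)\in\mathbb{C}^+$, where $\cg_{m-1}$ is analytic with $\Im\cg_{m-1}<0$ and in particular nonzero, so $\cf_m\ne0$; for $|x|\ge2$ we have $\cf_1(x+i0^+)\in\mathbb{R}$, and since $\nu_{m-1}$ has no atoms by induction, $\cg_{m-1}$ stays finite and the same conclusion holds. Thus $\cg_m$ has finite boundary values everywhere on $\mathbb{R}$ and $\nu_m$ carries no atom. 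Absolute continuity then follows from the Poisson-kernel identity $-\tfrac1\pi\Im\cg_m(x+iy)=(P_y*\nu_m)(x)$: the continuous finite boundary values and the resulting local bound on $\Im\cg_m$ allow one to pass to the limit $y\to0^+$ against any $\phi\in C_c(\mathbb{R})$ by dominated convergence, whence $\nu_m=-\tfrac1\pi\,\Im\cg_m(\cdot+i0^+)\,dx$ through the Stieltjes inversion \eqref{stiedef}; in particular there is no singular continuous part.

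Compact support is the easy half. Since $\|A_k\|=\|A_k^{\dag}\|=1$, each $G_k$ has norm at most $2$, so $S_m=\sum_{k\in I}G_k$ is a bounded self-adjoint operator with $\|S_m\|\le 2m$, and its vacuum spectral measure $\nu_m$ is supported in $[-2m,2m]$; a sharper, $\cf_1$-orbit description of the support (the density $-\tfrac1\pi\Im\cg_m(x+i0^+)$ being nonzero exactly when some iterate $\cf_1^{\circ j}(x)$, $0\le j\le m-1$, lands in $(-2,2)$, which together with $\cf_1(x)\sim x-\tfrac1x$ confines $x$ to a bounded symmetric interval) will be needed later for the endpoint recurrence. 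The crux of the argument, and the step I expect to be most delicate, is maintaining the inductive hypothesis at the boundary: one must check that the continuous extension of $\cg_{m-1}$ really is finite and continuous at the endpoints of $\supp\nu_{m-1}$ and at the $\cf_1$-images of the transition points $|x|=2$, where the density may display square-root type behaviour and the real part of $\cg_{m-1}$ (a principal-value Hilbert transform) must be controlled, and to confirm that finite continuous boundary values genuinely preclude a singular continuous component.
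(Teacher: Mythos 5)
Your proposal is correct, but it follows a genuinely different route from the paper. The paper does not use the compositional identity $\cg_m=\cg_{m-1}\circ\cf_1$ at this point; instead it takes compact support from Lemma 6.4 of \cite{Mu}, and for absolute continuity invokes a consequence of the de la Vall\'ee Poussin theorem (\cite{Sch}, Theorem F.6): the singular part of $\nu_m$ is carried by the set $S_{\nu_m}=\{x:\ |\Im\cg_m(x+i0)|=+\infty\}$. It then runs an induction through the \emph{additive} identity \eqref{cauchym2}, $\cg_m(z)+\cf_m(z)=z-K_{m-1}(z)$: if $|\Im\cg_m(x_0+i0)|=+\infty$ then $\Im\cf_m(x_0+iy)\to 0$, so by the triangle inequality some $\cg_j$, $j<m$, must blow up at $x_0$, contradicting $S_{\nu_j}=\emptyset$. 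This argument is softer than yours: it never needs boundary limits of $\cg_m$ to exist, only that the blow-up set is empty, and in particular it sidesteps entirely the endpoint regularity you flag as ``the most delicate step.'' Your approach, by contrast, buys more: propagating the hypothesis that $\cg_{m-1}$ extends continuously and finitely to $\bc^+\cup\br$ yields a \emph{continuous} density for $\nu_m$, not merely absolute continuity, and your $\cf_1$-orbit picture of the support anticipates exactly the mechanism of the paper's Theorem \ref{am+1thm} (there $Z=\cf_1^{-1}$ is the Zhukovsky map and $a_{m+1}=a_m+1/a_m$). Your compact-support argument via $\|S_m\|\leq 2m$ is also more elementary and self-contained than the paper's citation of Muraki. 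Two remarks on your own assessment: the step you single out as delicate is in fact automatic --- since $\cf_1$ maps $\bc^+\cup\br$ continuously into $\bc^+\cup\br$ and the inductive hypothesis gives joint continuity of $\cg_{m-1}$ on the closed half-plane (true at the base, as $\cg_1(z)=\frac12\bigl(z-\sqrt{z^2-4}\bigr)$ is continuous up to $\br$, including at $\pm 2$), the composition $\cg_m=\cg_{m-1}\circ\cf_1$ is continuous with finite values, and no separate control of the real part or of endpoint behaviour is required; likewise your atom-exclusion paragraph is subsumed by this finiteness, since an atom at $a$ would force $\Im\cg_m(a+iy)\to-\infty$.
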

\begin{proof}
As usual it is enough to prove the statement when $I=[m]$. Let $\nu_m$ be the vacuum law of $G_1+\cdots + G_m$. From Lemma 6.4 in \cite{Mu}, $\nu_m$ has compact support. As a consequence of the Lebesgue decomposition theorem, we only need to prove that, for any $m$, the singular part of $\nu_m$ w.r.t. its absolutely continuous one is null. This in particular entails that $\nu_m$ has no atoms for any $m$.

\noindent Indeed, from a consequence of the de la Vall\'{e}e Poussin Theorem (see \cite{Sch}, Theorem F.6), one has the singular part of any positive measure $\m$ is supported by the set
$$
S_{\m}:=\{x\in \supp(\m); |\Im(\cg_m(x+i0))|=+\infty\}.
$$
From Proposition \ref{wig} the vacuum law of $G_1$ is absolutely continuous. Suppose this property  holds for any $G_1+\cdots +G_k$, $k<m$, i.e. $S_{\nu_k}=\emptyset$.
We prove $S_{\nu_m}$ is empty too. Indeed, suppose $S_{\nu_m}\neq \emptyset$. Then, if $x_0\in S_{\nu_m}$, since
$$
\limsup_{y\searrow 0}\big|\Im(\cf_m(x_0+iy))\big|=\limsup_{y\searrow 0}\bigg|-\frac{\Im(\cg_m(x_0+iy))}{|\cg_m(x_0+iy)|^2}\bigg|=0,
$$
from \eqref{cauchym2} and the triangular inequality, one has there exists $j=1,\ldots, m-1$ such that
$$
\big|\Im(\cg_j(x_0+i0))\big|=+\infty.
$$
Since  $\cg_j$ is analytic at any point outside the support of $\nu_j$, such a condition entails $x_0\in \supp(\nu_j)$. This  contradicts the induction assumption.
\end{proof}
In the following lines we give a recurrence formula for the right endpoint of the compact support for any $\nu_m$, and an evaluating formula to estimate its value without using the recurrence relation.

To this aim we use some properties of $\cf_1(z)$. We preliminary notice it maps $\bc^{+}\cup \br$ into $\bc^{+}\cup \br$ without the open unit half-disk. Indeed, it is known that $\cf_1$ maps $\bc^{+}$ into $\bc^{+}$, and if $z\in]-2,2[$, one finds $\cf_1(z)\in \bc^{+}$ with $|\cf_1(z)|=1$, whereas for $z\in\br$ with $|z|>2$, then $\cf_1(z)$ is real with $|\cf_1(z)|>1$. It remains to check any $w$ in the range of $\cf_1$ has modulus not less than $1$. After noticing that conditions above give $w\neq0$, take $z\in \bc^{+}\cup \br$ such that $\cf_1(z)=w$. This means that $z=w+\frac{1}{w}$. The above discussion suggests we can restrict to $z\in\mathbb{C}^+$. The last inequality in this case gives $\bigg(1-\frac{1}{|w|^2}\bigg)\Im w=\Im z$, which automatically entails $|w|^2\geq 1$.

As $\cf_1$ is one-to-one, we denote by $Z$ its composition inverse, which turns out to be conformal, i.e. it is the so-called \emph{Zhukovsky map}
$$
Z(w)=w+\frac{1}{w}.
$$
Since \eqref{cauchym2}, one has $Z$ is the inverse of $\cg_1$ too, and after denoting by $Z_m$ the $m$-fold composition of $Z$ with itself, it is easy to check
\begin{equation}
\label{zou}
Z_m(\cf_m(z))=z.
\end{equation}
\begin{thm}
\label{am+1thm}
Let $\pm a_m$ be the endpoints of the support of $\nu_m$. Then, for each $m\geq1$, one has $\supp(\nu_m)=[-a_m, a_m]$ and
\begin{equation}
\label{am+1}
a_{m+1}=a_m+\frac{1}{a_m}.
\end{equation}
\end{thm}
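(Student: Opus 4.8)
The plan is to pin down the right endpoint $a_m$ of $\supp(\nu_m)$ by tracking, along the real axis, where the reciprocal Cauchy transform $\cf_m=1/\cg_m$ takes real values (zero density) and where it takes values in $\bc^{+}$ (positive density). The whole argument rests on the relation \eqref{zou}, $Z_m(\cf_m(z))=z$, together with the fact that iterating $\cf_m=\cf_{m-1}(\cf_1(\,\cdot\,))$ from Theorem \ref{conv} gives $\cf_m=\cf_1^{\circ m}$, and on the elementary properties of $\cf_1(w)=\tfrac12(w+\sqrt{w^2-4})$ and of the Zhukovsky map $Z(w)=w+\tfrac1w$ recorded before \eqref{zou}. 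First I would note that, since the odd moments $\mu_{[m],2n+1}$ vanish, $\nu_m$ is symmetric, so $\supp(\nu_m)$ is symmetric about the origin and it suffices to analyse the half-line $x>0$. Because $\nu_m$ is already known to be absolutely continuous and compactly supported by Theorem \ref{lemabscon}, its density equals $-\tfrac1\pi\Im\cg_m(x+i0)=-\tfrac1\pi\Im\big(1/\cf_m(x+i0)\big)$, so a point $x>0$ lies in the support precisely when $\cf_m(x+i0)\in\bc^{+}$, and lies outside it precisely when $\cf_m(x+i0)$ is real.

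Set $a_m:=Z_m(1)$, so that $a_1=Z(1)=2$ and $a_{m+1}=Z_{m+1}(1)=Z(Z_m(1))=Z(a_m)=a_m+\tfrac1{a_m}$; thus \eqref{am+1} follows the moment I show that this $a_m$ is the right endpoint of the support. I would prove by induction on $m$ the statement $P(m)$: (i) $\cf_m$ restricts to an increasing real bijection $(a_m,\infty)\to(1,\infty)$, inverse to $Z_m|_{(1,\infty)}$; (ii) $\cf_m(a_m)=1$; and (iii) $\cf_m$ maps $(0,a_m)$ into $\bc^{+}$. The base case $m=1$ is explicit from $\cf_1$. For the inductive step I write $\cf_{m+1}=\cf_1\circ\cf_m$ and use that $a_{m+1}=Z_m(Z(1))=Z_m(2)$, whence (i) gives $\cf_m(a_{m+1})=\cf_m(Z_m(2))=2$. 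Then: for $x>a_{m+1}$ one has $\cf_m(x)>2$, so $\cf_{m+1}(x)=\cf_1(\cf_m(x))\in(1,\infty)$ is real; for $x\in(a_m,a_{m+1})$ one has $\cf_m(x)\in(1,2)$, and $\cf_1$ sends $(1,2)$ onto the open upper unit semicircle, so $\cf_{m+1}(x)\in\bc^{+}$; and for $x\in(0,a_m)$ the hypothesis gives $\cf_m(x)\in\bc^{+}$, whence $\cf_{m+1}(x)=\cf_1(\cf_m(x))\in\bc^{+}$ because $\cf_1$ preserves $\bc^{+}$. Together with $\cf_{m+1}(a_{m+1})=\cf_1(2)=1$, these give $P(m+1)$.

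With $P(m)$ in hand, for each $m$ the density of $\nu_m$ vanishes on $(a_m,\infty)$ and is strictly positive on all of $(0,a_m)$; by symmetry the same holds on the negative axis, so $\supp(\nu_m)=[-a_m,a_m]$ exactly, with no interior gaps. Combined with $a_m=Z_m(1)$ and $Z(a_m)=a_m+\tfrac1{a_m}$, this is precisely the assertion of the theorem.

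The main obstacle I anticipate is the careful bookkeeping of branches and boundary values: one must justify that the real boundary value $\cf_m(x+i0)$ is obtained by literally substituting the real argument into the explicit composition $\cf_1^{\circ m}$ (so that ``$\Im\cf_m(x)>0$'' and ``$\cf_m(x)$ real'' are the genuine Stieltjes limits), and that the branch of $\sqrt{\,\cdot\,}$ in each factor $\cf_1$ is consistently the one keeping the image in $\bc^{+}\cup\br$ off the open unit half-disk. The conceptual heart is the observation that the transition from support to non-support occurs exactly at the value $\cf_m=1$, i.e. at the critical point of $Z$ where $Z'(1)=0$; this is what forces $\cf_1(2)=1$ and propagates the endpoint through the composition, yielding the recurrence $a_{m+1}=Z(a_m)$.
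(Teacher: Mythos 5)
Your proposal is correct, and it shares the paper's essential skeleton --- the composition formula $\cf_m=\cf_1^{\circ m}$ from Theorem \ref{conv}, the Zhukovsky map $Z$ as inverse of $\cf_1$, and the pivotal identity $\cf_m(a_m)=1$ (equivalently $a_m=Z_m(1)$), from which \eqref{am+1} is immediate --- but your mechanism for identifying the support is genuinely different. The paper never computes boundary values: it first proves $\supp(\nu_{m-1})\subseteq\supp(\nu_m)$ by a propagation-of-non-analyticity argument (if $\cf_{m-1}$ fails to be analytic at a real point, so does $\cg_m=\cg_1\circ\cf_{m-1}$), locates $a_m$ as the preimage of the branch point via $\cf_{m-1}(a_m)^2-4=0$, hence $\cf_{m-1}(a_m)=2$ and $\cf_m(a_m)=1$, and then closes the interval $(a_{m-1},a_m)$ by observing that $\cf_{m-1}$ maps it into $[1,2]\subset\supp(\nu_1)$, where $\cf_1$ is not holomorphic. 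You instead define $a_m:=Z_m(1)$ at the outset and run an induction on the sign of the boundary values: $\cf_m(x+i0)$ real (hence zero density) on $(a_m,\infty)$, and in $\bc^{+}$ (hence positive density) on $(0,a_m)$, using that $\cf_1$ maps $(1,2)$ into the upper unit semicircle and preserves $\bc^{+}$. This buys you slightly more than the theorem asserts --- strict positivity of the density on all of $(-a_m,a_m)$ --- and it absorbs the support-monotonicity step \eqref{supp} into the induction instead of proving it separately. The price is the branch and boundary-value bookkeeping you flag at the end; it is in fact routine here, since $\cf_1$ extends continuously to $\overline{\bc^{+}}$ with values in $\overline{\bc^{+}}$ off the open unit half-disk, so the boundary values of the iterates compose as you claim, and $\big|\cf_m(x+i0)\big|\geq 1$ rules out zeros of $\cf_m$, so the density $\frac{1}{\pi}\,\Im\cf_m(x+i0)/\big|\cf_m(x+i0)\big|^{2}$ is well defined; the paper's holomorphy argument avoids tracking branches altogether. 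One small overstatement: $\cf_1$ maps $(1,2)$ \emph{into}, not onto, the open upper unit semicircle (its image is the arc with real part in $(\tfrac12,1)$), but nothing in your argument uses surjectivity.
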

\begin{proof}
We first recall that any $\cf_m$ is holomorphic in $\mathbb{C^+}\cup (\br \setminus \supp(\nu_m))$, and prove that for any $m\geq2$,
\begin{equation}
\label{supp}
\supp(\nu_{m-1})\subseteq \supp(\nu_m).
\end{equation}
As the measures involved are all symmetric, it suffices to check that any positive $x\in\supp(\nu_{m-1})$ automatically belongs to $\supp(\nu_m)$. If this does not happen, then there exists a positive $x_0$ in $\supp(\nu_{m-1})$ where $\cf_m$ and then $\cg_m$ are holomorphic. This contradicts \eqref{recur}, as for any $z$ one has $\cg_m(z)=\cg_1(\cf_{m-1})(z)$, and $\cf_{m-1}$ is not analytic in $x_0$. As a consequence, it follows $a_{m-1}\leq a_m$. We now show that for each $m\geq 1$,
\begin{equation}
\label{Fmam}
Z_m(1)=a_m.
\end{equation}
In fact, $\cf_m=\cf_1\circ \cf_{m-1}$ and any $\cf_m$ is a real, increasing and continuous map if restricted to $[a_m,+\infty)$. This entails $z=\pm a_m$ satisfies $\cf_{m-1}(z)^2-4=0$. After recalling that $a_1=2$, one finds $\cf_{m-1}(a_m)=a_1$. As $\cf_1(a_1)=1$, one then has $\cf_m(a_m)=1$. This relation is equivalent to \eqref{Fmam}, as a consequence of \eqref{zou}. Thus
$$
a_{m+1}=Z_{m+1}(1)=Z(a_m)=a_m+\frac{1}{a_m}.
$$
Finally, we prove any real $x$ with modulus less than or equal to $a_m$ belongs to $\supp(\nu_m)$. Indeed, $\supp(\nu_1)=[-2, 2]$, and we further assume that $\supp(\nu_k)=[-a_k, a_k]$, $k<m$. Again it is enough to get $x$ positive, and by \eqref{supp} and \eqref{am+1}, we need to show that $x\in \supp(\nu_m)$ only when $a_{m-1}<x< a_m$. Since $1=\cf_{m-1}(a_{m-1})\leq\cf_{m-1}(x)\leq \cf_{m-1}(a_{m})=2$, one obtains $\cf_{m-1}(x)\in\supp(\nu_1)$. Therefore $\cf_m=\cf_1\circ \cf_{m-1}$ is not holomorphic in $x$.
\end{proof}
It is well known that rescaled sums of monotonically independent algebraic random variables with mean zero and variance 1 converge in the sense of moments to the arcsine law supported in $(-\sqrt{2},\sqrt{2})$ \cite{Mur}. It appears then natural to imagine that $\lim_{n}\frac{a_n}{\sqrt{n}}=\sqrt{2}$. This is in fact an immediate consequence of the following result, which gives an estimate for the magnitude of the supports of the vacuum distributions for sums of position operators in the weakly monotone Fock space.
\begin{thm}
\label{estim}
For any $m\geq 3$, the right endpoints $a_m$ of the support of the $m$-fold monotone convolution of the standard Wigner law, satisfy
\begin{equation}
\label{sqrt2nan}
\sqrt{m+\sqrt{m(m+1)}}\leq a_m\leq \sqrt{2m+\sqrt{2m}}.
\end{equation}
\end{thm}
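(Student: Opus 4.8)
The plan is to reduce everything to the scalar recurrence furnished by Theorem \ref{am+1thm} and then run two monotone inductions. Since $a_1=2$ and $a_{m+1}=a_m+1/a_m$, squaring the recurrence gives, for $b_m:=a_m^2$,
\[
b_1=4,\qquad b_{m+1}=b_m+2+\frac{1}{b_m},
\]
and the claimed estimate \eqref{sqrt2nan} is exactly $L_m\le b_m\le U_m$ for $m\ge 3$, where $L_m:=m+\sqrt{m(m+1)}$ and $U_m:=2m+\sqrt{2m}$. I would first record the elementary fact that $h(x):=x+2+1/x$ is strictly increasing on $[1,\infty)$ (its derivative is $1-1/x^2$), so that, as every $b_m\ge 1$, the iteration $b_{m+1}=h(b_m)$ is order-preserving. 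This monotonicity is the engine driving both inductions.

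For the lower bound I would prove $b_m\ge L_m$ for all $m\ge 1$ by induction. The base case $b_1=4\ge 1+\sqrt{2}$ is immediate, and monotonicity of $h$ reduces the inductive step to the purely scalar inequality $h(L_m)\ge L_{m+1}$, i.e. $2+1/L_m\ge L_{m+1}-L_m$. Writing $L_{m+1}-L_m=1+\sqrt{(m+1)(m+2)}-\sqrt{m(m+1)}$ and telescoping the conjugate differences turns this into
\[
(\sqrt{m+2}+\sqrt{m})^2(\sqrt{m+1}+\sqrt{m})(\sqrt{m+2}+\sqrt{m+1})\ge 2L_m,
\]
whose left side is $\ge 16m^2$ while $2L_m\le 4m+1$ (using $\sqrt{m^2+m}\le m+\tfrac12$); the step therefore holds with ample room for every $m\ge 1$.

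For the upper bound I would prove $b_m\le U_m$ for $m\ge 3$ by induction, the restriction $m\ge 3$ being genuine: one checks $b_1=4>2+\sqrt{2}$ and $b_2=\tfrac{25}{4}>4+2$, whereas $b_3=\tfrac{841}{100}\le 6+\sqrt{6}$ (equivalently $2.41^2=5.8081\le 6$), which supplies the base case. Again monotonicity reduces the step to $h(U_m)\le U_{m+1}$, i.e. $\sqrt{2(m+1)}-\sqrt{2m}\ge 1/(2m+\sqrt{2m})$; rationalizing the left side converts this into $2\sqrt{2m}(\sqrt{2m}+1)\ge \sqrt{2m+2}+\sqrt{2m}$, which is clear since the left side equals $4m+2\sqrt{2m}$. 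Combining the two inductions for $m\ge 3$ and translating back via $a_m=\sqrt{b_m}$ yields \eqref{sqrt2nan}.

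The routine part is the square-root algebra; the only real decision is the change of variable $b_m=a_m^2$, which linearizes the increment to $2+1/b_m$ and makes both target inequalities tractable. The main obstacle I anticipate is organizational rather than deep: one must verify that each induction is driven \emph{solely} by the corresponding scalar inequality $h(L_m)\ge L_{m+1}$ or $h(U_m)\le U_{m+1}$ — this is precisely where the monotonicity of $h$ together with $b_m\ge 1$ enters — and one must not overlook that the lower bound is valid already from $m=1$ while the upper bound only becomes valid at $m=3$, which is the true reason for the hypothesis $m\ge 3$.
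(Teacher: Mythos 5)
Your proof is correct and follows essentially the same route as the paper: both arguments induct from the base case $m=3$ (where $a_3=\frac{29}{10}$) using monotonicity of the increment map on $[1,\infty)$ — the paper's Zhukovsky map $Z(x)=x+\frac{1}{x}$ becomes your $h(x)=x+2+\frac{1}{x}$ after the substitution $b_m=a_m^2$, which is just the squaring the paper performs inside its step inequalities \eqref{2m} and the left-hand verification — and both reduce each bound to the same scalar inequalities $h(L_m)\geq L_{m+1}$ and $h(U_m)\leq U_{m+1}$, checked by rationalizing conjugate differences. Your side remarks also match the paper: the lower bound holds already for $m=1,2$ (used by the paper in deriving \eqref{dec}), while the upper bound genuinely fails at $m=1,2$, which is the reason for the hypothesis $m\geq 3$.
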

\begin{proof}
Indeed, we first notice the Zhukovsky map $Z$, when restricted to the reals, is increasing in $[1,\infty)$. Since from \eqref{am+1} $a_3=\frac{29}{10}$, the inequalities in \eqref{sqrt2nan} hold when $m=3$. Suppose they are true for any $k\leq m$. For the right inequality, one easily verifies that for any $l\geq 1$
\begin{equation}
\label{2m}
\sqrt{l+2+\sqrt{l+2}}\geq \sqrt{l+\sqrt{l}}+\frac{1}{\sqrt{l+\sqrt{l}}}.
\end{equation}
The induction assumption and \eqref{am+1} ensure
\begin{equation*}
a_{m+1}\leq Z\bigg(\sqrt{2m+\sqrt{2m}}\bigg)\leq\sqrt{2(m+1)+\sqrt{2(m+1)}},
\end{equation*}
the last inequality coming from \eqref{2m}, when $l=2m$.\\
For the left inequality in \eqref{sqrt2nan}, after noticing that
$$
a_{m+1}=Z(a_m)\geq Z\bigg(\sqrt{m+\sqrt{m(m+1)}}\bigg)
$$
it is enough to check
\begin{equation*}
\label{left}
\sqrt{m+\sqrt{m(m+1)}}+ \frac{1}{\sqrt{m+\sqrt{m(m+1)}}}\geq \sqrt{m+1+\sqrt{(m+1)(m+2)}}.
\end{equation*}
The above inequality, after squaring and obvious cancelations, is equivalent to
$$
\sqrt{m(m+1)}+1+\frac{1}{m+\sqrt{m(m+1)}}\geq \sqrt{(m+1)(m+2)}.
$$
Now we observe that $1+\frac{1}{m+\sqrt{m(m+1)}}=\sqrt{(m+1)/m}$, which easily leads to the conclusion.
\end{proof}
A nice consequence of the estimate above is that the support of the dilated measures $\big(\frac{\nu_n}{\sqrt{n}}\big)_{n\geq 1}$ is bigger than the support of the standard arcsine law, and decreases more and more when $n$ is growing. More in detail, for any $n$, one has
\begin{equation}
\label{dec}
\frac{a_{m+1}}{a_m}\leq \sqrt{\frac{m+1}{m}}\,.
\end{equation}
In fact, one firstly notices that the left inequality in \eqref{sqrt2nan} holds for $m\leq 2$ too, and entails
$$
a_m^2\bigg(\frac{\sqrt{m+1}-\sqrt{m}}{\sqrt{m(m+1)}}\bigg)\geq \frac{1}{\sqrt{m+1}},
$$
which turns out to be equivalent to
$$
\frac{a_m}{\sqrt{m}}\geq \frac{1}{\sqrt{m+1}}\bigg(a_m+\frac{1}{a_m}\bigg).
$$
This last inequality gives \eqref{dec} after using \eqref{am+1}.

\section{appendix}

In the next lines we present some computations, obtained in collaboration with F. Lehner, related to the features developed in the previous sections.

\noindent We first present some examples of sequences of moments for the sum of $m$ position operators, the recursive formula being \eqref{dn^m}.
\begin{eqnarray*}
d_n^{(2)}&=&1, 2, 7, 29, 131, 625, 3099, 15818, 82595, \ldots \\
d_n^{(3)}&=&1, 3, 15, 87, 544, 3566, 24165, 167904, \ldots \\
d_n^{(4)}&=&1, 4, 26, 194, 1551, 12944, 111313, 979009, \ldots\\
d_n^{(5)}&=&1, 5, 40, 365, 3555, 36045, 375797, 4000226, 43279506, \ldots \\
d_n^{(6)}&=&1, 6, 57, 615, 7064, 84307, 1033089, 12909546,  163799094, \ldots \\
d_n^{(7)}&=&1, 7, 77, 959, 12691, 174265, 2454221, 35215061, 512675782,  \ldots\\
d_n^{(8)}&=&1, 8, 100, 1412, 21154, 328496, 5227522, 84698378, 1391557207,  \ldots\\
d_n^{(9)}&=&1, 9, 126, 1989, 33276, 576564, 10230750, 184733379, 3380878107,  \ldots\\
d_n^{(10)}&=&1, 10, 155, 2705, 49985, 955965, 18713619, 372615462, 7517051642,  \ldots\\
\end{eqnarray*}

\noindent In Proposition \ref{momentspoly} we showed that for any $n\geq0$, $d_n^{(m)}$ is a polynomial of degree $n$ on $m$. Below they are listed until $n=7$.
\begin{equation*}
\begin{split}
d_0^{(m)}&\equiv 1\\
d_1^{(m)} & = m \\
d_2^{(m)} & = \frac{3m^2+m}{2} \\
d_3^{(m)} & = \frac{m(5m^2+4m+1)}{2}\\
d_4^{(m)}&=\frac{35}{8}m^4+\frac{71}{12}m^3+\frac{25}{8}m^2+\frac{7}{12}m \\
d_5^{(m)}&=\frac{63}{8}m^5+\frac{31}{2}m^4+\frac{311}{24}m^3+5m^2+\frac{2}{3}m\\
d_6^{(m)}&=\frac{231}{16}m^6+\frac{3043}{80}m^5+\frac{2135}{48}m^4+\frac{429}{16}m^3+\frac{91}{12}m^2+\frac{13}{20}m\\
d_7^{(m)}&=\frac{429}{16}m^7+\frac{2689}{30}m^6+\frac{4099}{30}m^5+\frac{685}{6}m^4+\frac{2453}{48}m^3+\frac{51}{5}m^2+\frac{9}{20}m.\\
\end{split}
\end{equation*}

\noindent The first terms of the monotone cumulants $(r_n)_{n\geq 1}$ for the standard semicircle law are
$$
0, 1, 0, \frac{1}{2},0,\frac{1}{2},0,\frac{7}{12},0,\frac{2}{3},0, \frac{13}{20},0,\frac{9}{20},0,\frac{71}{280},0,\frac{121}{40},0,\frac{19}{7}.
$$

\noindent Finally, we find the first orthogonal polynomials for the distribution with density $\gpg(x)$ appeared in Theorem \ref{dens}.
\begin{align*}
&p_1(x)=x \\
&p_2(x)=x^2-x \\
&p_3(x)=x^3- \frac{7}{2}x \\
&p_4(x)=x^4-5x^2+3 \\
&p_5(x)=x^5-\frac{59}{9}x^3+\frac{76}{9}x \\
&p_6(x)=x^6-\frac{57}{7}x^4+\frac{344}{21}x^2-\frac{100}{21}\\
&p_7(x)=x^7-\frac{243}{25}x^5+\frac{668}{25}x^3-\frac{452}{25}x \\
&p_8(x)=x^8-\frac{4149}{368}x^6+\frac{14491}{368}x^4-\frac{2003}{46}x^2+\frac{681}{92} \\
&p_9(x)=x^9-\frac{2911}{227}x^7+\frac{49429}{908}x^5-\frac{77127}{908}x^3+\frac{8039}{227}x \\
&p_{10}(x)=x^{10}-\frac{74473}{5177}x^8+\frac{372967}{5177}x^6-\frac{758082}{5177}x^4+\frac{535355}{5177}x^2-\frac{59841}{5177}.
\end{align*}

\bigskip

\noindent \textbf{Acknowledgements.}
The first and the second named authors acknowledge the support of the italian INDAM-GNAMPA and Fondi di Ateneo Universit\`a di Bari ``Probabilit\`a Quantistica e Applicazioni''.\\
J. Wysoczanski acknowledges the support by the Polish National Science Center Grant No. 2016/21/B/ST1/00628.\\
The authors also express deep gratitude toward profs. F. Lehner and W. M{\l}otkowski for useful discussions, and thank an anonymous referee whose nice comments improved the presentation of the paper.

\end{document}